\numberwithin{equation}{section}
\def\ba{\begin{eqnarray}}
\def\ea{\end{eqnarray}}
\def\R{\Bbb R}
\def\Z{\Bbb Z}
\def\no{\noindent}
\def\endproof{\hphantom{MM}\hfill\llap{$\square$}\goodbreak}
\newcommand{\lp}{\left(}
\newcommand{\rp}{\right)}
\DeclareMathOperator{\trace}{Tr}
\DeclareMathOperator{\divergence}{div}
\DeclareMathOperator{\Sym}{Sym}
\newcommand{\beq}{\begin{equation}}
\newcommand{\eeq}{\end{equation}}
\newcommand{\ben}{\begin{eqnarray}}
\newcommand{\een}{\end{eqnarray}}
\newcommand{\beno}{\begin{eqnarray*}}
\newcommand{\eeno}{\end{eqnarray*}}
\newtheorem{Theorem}{Theorem}[section]
\newtheorem{Proposition}[Theorem]{Proposition}
\newtheorem{Lemma}[Theorem]{Lemma}
\newtheorem{Remark}[Theorem]{Remark}
\def\blfootnote{\xdef\@thefnmark{}\@footnotetext}
\begin{document}
\title{Further results on the fractional Yamabe problem:\\the umbilic case }

\author{Mar\'ia del Mar Gonz\'alez$^\ddag$, Meng Wang$^\dag$
\\[2mm]
{\small $ ^\dag$  Department of Mathematics, Zhejiang University, Hangzhou
310027, P. R. China}\\
{\small E-mail:  mathdreamcn@zju.edu.cn}\\[2mm]
{\small $ ^\ddag$ Univ. Polit\`ecnica de Catalunya, Spain}\\
{\small E-mail: mar.gonzalez@upc.edu}\\[2mm]
}

\blfootnote{M.d.M. Gonz\'alez is supported by Spain Government project MTM2011-27739-C04-01 and GenCat 2009SGR345. M. Wang is supported by NSFC 11371316. Both authors would like to acknowledge the hospitality of Princeton University where this work was initiated.}

\date{\today}
\maketitle

\begin{abstract}
 We prove some existence results  for the fractional Yamabe problem in the case that the boundary manifold is umbilic, thus covering some of the cases not considered by Gonz\'alez and Qing.  These are inspired by the work of Coda-Marques on the boundary Yamabe problem but, in addition, a careful understanding of the behavior at infinity for asymptotically hyperbolic metrics is required.
\end{abstract}


\section{Introduction and statement of results}

Suppose that $X^{n+1}$ is a smooth manifold with smooth boundary $M^{n}$ for $n\ge 3$.  A function $\rho$ is a defining function on the boundary $M^n$ in $X^{n+1}$ if
\begin{equation*}
\rho>0~\mbox{in}~X^{n+1},~\rho=0~\mbox{on}~M^n,~d\rho\neq0~\mbox{on}~M^{n}.
\end{equation*}
We say that a Riemannian metric $g^{+}$ on $X^{n+1}$ is conformally compact if, for some defining function $\rho$, the metric $\bar{g}=\rho^2g^{+}$ extends smoothly to $\overline{X}^{n+1}$. This induces a conformal class of metrics $\hat{h}=\bar{g}|_{TM^n}$ on $M^n$ as defining functions vary. The conformal manifold $(M^n, [\hat{h}])$ is called the conformal infinity of $(X^{n+1},g^{+})$.

A metric $g^{+}$ is said to be asymptotically hyperbolic if it is conformally compact and the sectional curvature approaches $-1$ at infinity, which is equivalent to $|d\rho|_{\bar{g}}=1$ on $M^{n}$. If we have that $Ric[g^{+}]=-ng^{+}$, then we call $(X^{n+1},g^{+})$ a conformally compact Einstein manifold. In these settings, given a representative $\hat h$ of the conformal infinity, there exists a unique defining function $\rho$ such that in a tubular neighborhood near $M$, the metric $g^+$ has the normal form
\begin{equation}\label{normal}g^+=\frac{d\rho^2+h_\rho}{\rho ^2},\end{equation}
where $h_\rho$ is a one-parameter family of metrics on $M$ satisfying $h_0=\hat h$. In the Einstein case we may assume that $h_\rho$ as an asymptotic expansion which is even in powers of $\rho$. This is only true up to order $n$, but it will not be relevant to our study (see \cite{graham} for an introduction). We also denote $\bar g=\rho^2 g^+$.

For the rest of the paper, we will fix $\gamma\in(0,1)$. The conformal fractional Laplacian $P_\gamma^{\hat h}$ is constructed as the Dirichlet-to-Neumann operator for the scattering problem for $(X,g^+)$. In particular, from \cite{mazzeomelrose} and \cite{grahamzworski}, it is known that if  given $f\in \mathcal C^{\infty}(M)$, then for all but a discrete set of values $s\in\mathbb C$,  the generalized eigenvalue problem
\ba\label{generalizedeigen1}
-\Delta_{g^{+}}u-s(n-s)u=0,~~\mbox{in}~X,
\ea
has a solution of the form
\ba\label{generalizedeigen2}
u=F\rho^{n-s}+G\rho^s,~~F,G\in \mathcal C^{\infty}(\bar{X}),~~F|_{\rho=0}=f.
\ea
The scattering operator on $M$ is defined as
$$S(s)f=G|_{M},$$
and it is a meromorphic family of pseudo-differential operators in whole complex plane. In fact, the values $s=\frac{n}{2}$, $\frac{n}{2}+1$,... are simple poles of finite rank, these are called the trivial poles. $S(s)$ may have other poles (corresponding to the $L^2$-eigenvalues for $-\Delta_{g^+}$), but we will assume in the rest of the paper that we are not in such cases. More precisely, we will require that $\lambda_1(-\Delta_{g^+})>\frac{n^2}{4}-\gamma^2$, if one writes $s=\frac{n}{2}+\gamma$ for $\gamma\in(0,1)$ (this condition on $-\Delta_{g^+}$ was not written in \cite{gonzalezchang} but it should be added in \cite{gonzalezqing} for the study of the fractional Yamabe problem). Then the conformal fractional Laplacian on $(M,\hat h)$ is defined as
\begin{equation*}
P_{\gamma}^{\hat h}=d_{\gamma}S(\frac{n}{2}+\gamma),\quad \text{for a constant }\, d_{\gamma}=2^{2\gamma}\frac{\Gamma(\gamma)}{\Gamma(-\gamma)}.
\end{equation*}
Here the dependence on $g^+$ is always implicitly understood. With this normalization, the principal symbol of the operator $P_\gamma^{\hat h}$ equals that of $(-\Delta_{\hat{h}})^{\gamma}$.
The operators $P_{\gamma}^{\hat{h}}$ satisfy the following conformal covariance property: under a conformal change of metric
$$\hat{h}_{w}=w^{\frac{4}{n-2\gamma}}\hat{h}, \quad w>0,$$
we have
\ba\label{p}
P_{\gamma}^{\hat{h}_{w}}\phi=w^{-\frac{n+2\gamma}{n-2\gamma}}P_{\gamma}^{\hat{h}}(w\phi),
\ea
for all smooth functions $\phi$. One can also define ``fractional order curvature"
\ba\label{q}
Q_{\gamma}^{\hat g}:=P_{\gamma}^{\hat h}(1).
\ea
From \eqref{p} and \eqref{q}, we obtain the fractional curvature equation
\ba\label{qequation}
P_{\gamma}^{\hat{h}}w=w^{\frac{n+2\gamma}{n-2\gamma}}Q_{\gamma}^{\hat{h}_{w}}
\ea

The fractional Yamabe problem for $\gamma\in(0,1)$ was introduced in \cite{gonzalezqing}. In that paper the authors consider the following scale-free functional on metrics in the conformal class $[\hat{h}]$ given by
\begin{equation*}
I_{\gamma}[\hat{h}]=
\frac{\int_{M}Q_{\gamma}^{\hat{h}}\,dv_{\hat{h}}}{(\int_{M}\,dv_{\hat{h}})^{\frac{n-2\gamma}{n}}}.
\end{equation*}
$I_{\gamma}$ called the $\gamma-$Yamabe functional. Once $\hat{h}$ is fixed, one can write
\begin{equation*}
I_{\gamma}[w,\hat{h}]:=I_{\gamma}[\hat{h}_{w}]=\frac{\int_{M}w P_{\gamma}^{\hat{h}}w\, dv_{\hat{h}}}{\left(\int_{M}w^{2^{*}}\,dv_{\hat{h}}\right)^{\frac{2}{2^{*}}}},
\end{equation*}
where $2^{*}=\frac{2n}{n-2\gamma}$. The corresponding {\it $\gamma-$Yamabe problem} is   to find a metric in the conformal class $[\hat{h}]$ that minimizes the $\gamma-$Yamabe functional $I_{\gamma}$. As in the scalar curvature case one defines the $\gamma-$Yamabe  constant by
\begin{equation*}
\Lambda_{\gamma}(M,[\hat{h}])=\inf\{I_{\gamma}[h]:h\in [\hat{h}]\}.
\end{equation*}
It is clear that $\Lambda_{\gamma}(M,[\hat{h}])$ is an invariant in the conformal class $[\hat{h}]$ when $g^{+}$ is fixed.

In particular, if $w$ is a minimizer of $I_{\gamma}[w,\hat{h}]$, then the metric $\hat{h}_{w}$ has constant fractional  curvature; indeed, such $w$ is a solution to
\begin{equation}\label{equation0}
P_{\gamma}^{\hat{h}}w=c w^{\frac{n+2\gamma}{n-2\gamma}}.
\end{equation}
It is well known (\cite{gonzalezqing}) that the sign of such constant $c$ is equal (or zero) to the one of $\Lambda(M,[\hat h])$.

The non-local equation \eqref{qequation} on $M$ may be written as a degenerate elliptic problem in $X$. Indeed, one has the following extension problem (see  \cite{gonzalezchang,gonzalezqing,case-chang}). For the rest of the paper, we consider $\gamma\in(0,1)$ and we write $a=1-2\gamma$.
\begin{Lemma}[\cite{gonzalezchang}]\label{lemma1}
Let $(X,g^+)$ be an asymptotically hyperbolic manifold as explained above. Given $f\in \mathcal C^{\infty}(M)$, the generalized eigenvalue problem \eqref{generalizedeigen1}-\eqref{generalizedeigen2} is equivalent to
\begin{equation*}
\left\{
\begin{split}
-\divergence(\rho^a\nabla U)+E(\rho)U=0~~\mbox{in}~~(X,\bar{g}),\\
U|_{\rho=0}=f~~\mbox{on}~M,
\end{split}
\right.
\end{equation*}
where $U=\rho^{s-n}u$ and $U$ is the unique minimizer of the energy
$$F[V]=\int_{X}\rho^a|\nabla V|_{\bar{g}}^2\,dv_{\bar{g}}
+\int_{X} E(\rho)|V|^2\,dv_{\bar{g}}$$
among all the functions $V\in W^{1,2}(X,\rho^a)$ with fixed trace $V|_{\rho=0}=f$.
Here
$$
E(\rho)=\rho^{-1-s}(-\Delta_{g^{+}}-s(n-s))\rho^{n-s}.
$$
or equivalently,
\begin{equation*}\label{curvature}
E({\rho})=\frac{n-1+a}{4n}\left[R[\bar{g}]-\left(n(n+1)+R[g^{+}]\right)\rho^{-2}\right]\rho^a.
\end{equation*}
Moreover,
\begin{itemize}
\item[1.] For $\gamma\in(0,\frac{1}{2})$,
\ba\label{equality}
P_{\gamma}^{\hat{h}}f=-d_{\gamma}^{*}\lim_{\rho\rightarrow 0}\rho^a\partial_{\rho}U.
\ea
\item[2.] For $\gamma=\frac{1}{2}$,
\begin{equation*}\label{gamma12}
P_{\frac{1}{2}}^{\hat{h}}f=-\lim_{\rho\rightarrow 0}\partial_{\rho}U+\tfrac{n-1}{2}Hf,
\end{equation*}
where $H$ is the mean curvature of $(M,\hat h)$.

\item[3.] For $\gamma\in (\frac{1}{2},1)$, expression \eqref{equality}  holds if and only if $H=0$.
\end{itemize}
Here the constant is given by
\begin{equation}\label{constant-d}
d_{\gamma}^{*}=\frac{2^{2\gamma-1}\Gamma(\gamma)}{\gamma\Gamma(-\gamma)}.\end{equation}
\end{Lemma}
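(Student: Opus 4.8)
The plan is to reduce the generalized eigenvalue problem \eqref{generalizedeigen1}--\eqref{generalizedeigen2} in the hyperbolic metric $g^+$ to a degenerate-elliptic equation in the compactified metric $\bar g = \rho^2 g^+$ via the conformal change $u = \rho^{n-s} U$. First I would compute how the operator $-\Delta_{g^+} - s(n-s)$ transforms under $u \mapsto \rho^{s-n} u$. Using the conformal transformation law for the Laplacian in dimension $n+1$, namely that for $\bar g = \rho^2 g^+$ one has a relation between $\Delta_{g^+}$ and $\Delta_{\bar g}$ together with first-order terms and the conformal Laplacian, one rewrites $\rho^{-1-s}(-\Delta_{g^+} - s(n-s))(\rho^{n-s} U)$. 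A direct but careful calculation (expanding the Laplacian acting on the product $\rho^{n-s} U$, and using $|d\rho|_{\bar g}^2 = 1 + O(\rho)$ near $M$) should collapse the second-order part into the weighted divergence form $-\rho^{-2s+n-1}\divergence_{\bar g}(\rho^a \nabla_{\bar g} U)$ up to the overall power of $\rho$, with $a = 1-2\gamma = n - 2s + 1$ after substituting $s = \tfrac n2 + \gamma$; the remaining zeroth-order terms assemble into the potential $E(\rho)U$. The two stated forms of $E(\rho)$ are then seen to agree by the standard identity relating $R[\bar g]$ and $R[g^+]$ under the conformal change $\bar g = \rho^2 g^+$ (the Yamabe-operator identity in dimension $n+1$), which converts the raw expression $\rho^{-1-s}(-\Delta_{g^+}-s(n-s))\rho^{n-s}$ into the curvature form.

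Next I would establish the variational characterization: that $U$ solving the Dirichlet problem is the unique minimizer of $F[V]$ over $W^{1,2}(X,\rho^a)$ with trace $f$. This is the routine direct method — the bilinear form associated with $-\divergence(\rho^a \nabla \cdot) + E(\rho)$ is the first variation of $F$, so critical points are exactly weak solutions; coercivity and hence existence/uniqueness of the minimizer follow from the spectral assumption $\lambda_1(-\Delta_{g^+}) > \tfrac{n^2}{4} - \gamma^2$, which is precisely what guarantees the quadratic form $F$ is positive definite (this is where that hypothesis, flagged in the introduction, gets used). The weighted Sobolev trace theory of Nekvinda / the Caffarelli--Silvestre-type embedding ensures the trace map is well-defined on the relevant weighted space, so the constrained minimization is well-posed.

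Finally, the Neumann-data identities (items 1--3) come from examining the asymptotic expansion of $U$ as $\rho \to 0$. Writing $u = F\rho^{n-s} + G\rho^s$ gives $U = F + G\rho^{2s-n} = F + G\rho^{2\gamma}$, so $\rho^a \partial_\rho U = \rho^{1-2\gamma}\partial_\rho(F + G\rho^{2\gamma}) \to 2\gamma\, G|_M$ plus a contribution from $\partial_\rho F$ that vanishes in the limit when $\gamma < \tfrac12$ (since $\rho^{1-2\gamma}\to 0$); matching constants against $S(s)f = G|_M$ and the definition of $P_\gamma^{\hat h}$ yields \eqref{equality} with $d_\gamma^* = \tfrac{2^{2\gamma-1}\Gamma(\gamma)}{\gamma\Gamma(-\gamma)}$. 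For $\gamma \in (\tfrac12,1)$ the term $\rho^{1-2\gamma}\partial_\rho F$ blows up unless the first-order coefficient in the expansion of $F$ vanishes, and that coefficient is a multiple of the mean curvature $H$; hence the identity persists iff $H=0$. The borderline $\gamma = \tfrac12$ ($a=0$) is the delicate one: here $\partial_\rho U = \partial_\rho F + \partial_\rho(G\rho)$, and the $\partial_\rho F$ term contributes exactly $\tfrac{n-1}{2}Hf$ by the expansion $h_\rho = \hat h - \rho\, \mathrm{II} + \cdots$, giving the stated formula. I expect the main obstacle to be bookkeeping the precise constants and the $O(\rho)$ corrections to $|d\rho|_{\bar g}$ in the first step, and — for the sharp dependence on $H$ in items 2 and 3 — controlling the next-to-leading term in the polyhomogeneous expansion of the scattering solution, which is where the geometry of the boundary genuinely enters rather than just formal symbol computations.
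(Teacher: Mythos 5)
This lemma is quoted from \cite{gonzalezchang} and the paper itself gives no proof of it, so there is nothing internal to compare against; measured against the standard argument in that reference, your outline is correct and follows essentially the same route (conformal substitution $u=\rho^{n-s}U$ to reach the weighted divergence form, the curvature identity for the two expressions of $E(\rho)$, coercivity of $F$ from the spectral gap $\lambda_1(-\Delta_{g^+})>\tfrac{n^2}{4}-\gamma^2$, and the matching of $\lim_{\rho\to 0}\rho^a\partial_\rho U$ with $2\gamma\,G|_M$ plus the $\rho^{1-2\gamma}\partial_\rho F$ term whose first-order coefficient is the mean-curvature contribution). The only remaining work is exactly the constant and sign bookkeeping you flag, which is routine.
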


In the following we assume that $H\equiv 0$ in the case $\gamma\in(1/2,1)$. Note that this is automatically true in the Einstein case since the term $h_\rho$ in the normal form \eqref{normal} for the metric $g^+$ only has even terms in the expansion.

We also define the functional
\begin{equation}\label{barI}
\overline I_{\gamma}[U,\hat h]=\frac{d_{\gamma}^{*}\int_{X}\left({\rho}^a|\nabla U|_{\bar{g}}^2+E(\rho)U^2\right)\,dv_{\bar{g}}}
{(\int_{M}U^{2^{*}}\,dv_{\hat{h}})^{2/{2^{*}}}}.
\end{equation}
As a consequence of Lemma \ref{lemma1}, a minimizer $\overline I_\gamma$  will give a minimizer for the $\gamma$-Yamabe functional $I_\gamma$. In particular, if one defines
\begin{equation*}
\overline\Lambda_{\gamma}(X,[\hat{h}])=\inf\{\overline I_{\gamma}[U,\hat h]:U\in W^{1,2}(X,\rho^a)\},
\end{equation*}
then
$$\overline\Lambda_{\gamma}(X,[\hat{h}])=\Lambda_{\gamma}(M,[\hat{h}]).$$

We define the usual fractional Sobolev norm on $M$
$$\|w\|^2_{H^\gamma(M)}:=\|w\|^2_{L^2(M)}+ \int_M
w(-\Delta_{\hat h})^\gamma w\,dv_{\hat h},$$
and the weighted norm in the extension
$$
\|U\|_{W^{1,2}(X,\rho^a)}^2= \int_X \rho^a|\nabla U|_{\bar g}^2
\,dv_{\bar g}+\int_{X} \rho^a U^2  \,dv_{\bar g}.
$$
Thus the minimization problem for the functional \eqref{barI} is related to the well known trace Sobolev embedding
$$W^{1,2}(M,\rho^a)\to H^\gamma(M)\to L^{2^*}(M).$$
(see the papers \cite{gonzalezqing} and \cite{jinxiong}, and the references therein). On the Euclidean case $M=\mathbb R^n$, $X=\mathbb R^{n+1}_+$ the best constant in the Sobolev inequality above may be explicitly calculated. Indeed, for every $U\in W^{1,2}(\mathbb R^{n+1}_+,y^a)$, let $w:=U(\cdot,0)$, then
\begin{equation}\label{Euclidean-Sobolev}\|w\|_{L^{2^*}(\R^n)}^2\leq \bar
S(n,\gamma)\int_{\mathbb R^{n+1}_+} y^a |\nabla U|^2\,dxdy,\end{equation} where
\begin{equation*}\label{S-bar}\bar S(n,\gamma):= d^*_\gamma S(n,\gamma),\quad S(n,\gamma)=\frac{\Gamma\lp\frac{n-2\gamma}{2}\rp}{\Gamma\lp\frac{n+2\gamma}{2}\rp}|vol(\mathbb S^n)|^{-\frac{2\gamma}{n}}.
\end{equation*}
Equality holds if and only if
$$w(x)=c\left(\frac{\mu}{|x-x_0|^2+\mu^2}\right)^{\frac{n-2\gamma}{2}},\quad x\in\R^n,$$
for $c\in\mathbb R$, $\mu>0$ and $x_0\in\R^n$ fixed, and $U$
is the Poisson extension of $w$ given by
$$U(x,y)=\int_{\mathbb R^n} \frac{y^{1-a}}{\lp|x-\xi|^2+y^2\rp^{\frac{n+1-a}{2}}}\,w(\xi)\,d\xi.$$
In addition, \eqref{Euclidean-Sobolev} allows to calculate the best $\gamma$-Yamabe constant on the sphere with its canonical metric as the boundary of the Poincar\'e ball $\Lambda_\gamma(\mathbb S^n,[g_c])$ by stereographic projection. Indeed,
\begin{equation}\label{Sobolev-sphere}\Lambda_\gamma(\mathbb S^n,[g_c])=\frac{1}{S(n,\gamma)}.\end{equation}

The manifold version of \eqref{Euclidean-Sobolev} was considered in \cite{jinxiong}. From their results one can show that, in general,
$$\Lambda_{\gamma}(M,[\hat{h}])>-\infty.$$

We have all the ingredients needed to handle the fractional Yamabe problem. Indeed, as in the standard Yamabe problem (cf. \cite{leeparker, Schoen-Yau:book}), one must compare the value of the Yamabe constant to the one on the sphere:

\begin{Proposition}[\cite{gonzalezqing}]\label{previous-work}
Fix $\gamma\in(0,1)$. Let $(X^{n+1},g^{+})$ be an asymptotically hyperbolic manifold with conformal infinity $(M,[\hat h])$ as explained above and assume, in addition, that $H=0$ when $\gamma\in(\frac{1}{2},1)$. Then,
$$\Lambda_{\gamma}(M,[\hat{h}])\leq\Lambda_{\gamma}(\mathbb S^n,[g_c]).$$
Moreover, the strict inequality
\begin{equation}\label{condition-Lambda}
\Lambda_{\gamma}(M,[\hat{h}])<\Lambda_{\gamma}(\mathbb S^n,[g_c])
\end{equation}
ensures that the $\gamma-$Yamabe problem for $(M, \hat h)$ is solvable.
\end{Proposition}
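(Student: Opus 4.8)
The plan is to follow the classical Aubin--Schoen scheme for the Yamabe problem (cf.~\cite{leeparker}), transplanted to the nonlocal setting through the extension characterization of Lemma~\ref{lemma1}. The argument has two essentially independent parts: first the universal bound $\Lambda_\gamma(M,[\hat h])\le\Lambda_\gamma(\mathbb S^n,[g_c])$, obtained by evaluating the functional $\overline I_\gamma$ on a family of test functions that concentrate at a boundary point; and second, the solvability of the minimization problem for $\overline I_\gamma$ whenever the strict inequality \eqref{condition-Lambda} holds, obtained by solving subcritical approximations and using \eqref{condition-Lambda} to rule out loss of compactness.

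For the bound, fix $p\in M$, take the geodesic defining function $\rho$ associated with $\hat h$ and Fermi-type coordinates for $\bar g=\rho^2g^+$ centered at $p$, so that near $p$ the metric $\bar g$ is a small perturbation of the flat half-space metric $dx^2+d\rho^2$ with $\rho$ in the role of the vertical variable. Use as test functions cutoffs of the rescaled Euclidean extremal Poisson extension,
\[
U_\mu(x,\rho)=\eta(x,\rho)\,\mu^{-\frac{n-2\gamma}{2}}\,U_1\!\big(\tfrac{x}{\mu},\tfrac{\rho}{\mu}\big),
\]
where $U_1$ is the Poisson extension of the standard bubble $w_1(x)=(1+|x|^2)^{-\frac{n-2\gamma}{2}}$ and $\eta$ is a fixed cutoff supported in a small half-ball around $p$. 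Expanding $\overline I_\gamma[U_\mu,\hat h]$, the numerator equals $d_\gamma^*\int_{\mathbb R^{n+1}_+}\rho^a|\nabla U_1|^2\,dx\,d\rho$ plus error terms that are of lower order in $\mu$: those from the discrepancy between $\bar g$ and the flat metric, those from the curvature term $E(\rho)$ --- which, by the formula for $E$ and the asymptotic expansion \eqref{normal}, is controlled by the weight $\rho^a$ and hence integrable against the bubble --- and those from the cutoff annulus; meanwhile the denominator converges to $\big(\int_{\mathbb R^n}w_1^{2^*}\big)^{2/2^*}$. By \eqref{Euclidean-Sobolev} and \eqref{Sobolev-sphere} the ratio tends to $\Lambda_\gamma(\mathbb S^n,[g_c])$ as $\mu\to0$, which gives the inequality. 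The delicate point is the precise estimate of the cutoff error and of the contribution of $E(\rho)$, which is where the structure of the asymptotically hyperbolic metric near $M$ and the behaviour of $R[\bar g]$, $R[g^+]$ must be used; the hypothesis $H=0$ for $\gamma\in(\tfrac12,1)$ ensures the would-be leading boundary correction vanishes.

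For solvability, first solve the subcritical problems: for $q\in(2,2^*)$ minimize $d_\gamma^*\int_X(\rho^a|\nabla U|^2+E(\rho)U^2)\,dv_{\bar g}$ over $\{U\in W^{1,2}(X,\rho^a):\int_M|U|^q\,dv_{\hat h}=1\}$. Since the weighted trace embedding $W^{1,2}(X,\rho^a)\hookrightarrow L^q(M)$ is compact for $q<2^*$, the direct method yields a nonnegative minimizer $U_q$ solving the subcritical extension problem
\[
-\divergence(\rho^a\nabla U_q)+E(\rho)U_q=0\ \text{in}\ X,\qquad -d_\gamma^*\lim_{\rho\to0}\rho^a\partial_\rho U_q=\Lambda_q\,U_q^{q-1}\ \text{on}\ M,
\]
with $\Lambda_q$ the minimum value; testing with fixed functions gives $\limsup_{q\to2^*}\Lambda_q\le\Lambda_\gamma(M,[\hat h])$, and $\{U_q\}$ is bounded in $W^{1,2}(X,\rho^a)$. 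Passing to a weak limit $U_q\rightharpoonup U_\infty$, either $\|U_\infty\|_{L^{2^*}(M)}=1$, in which case weak lower semicontinuity of the quadratic form and the bound on $\Lambda_q$ force $U_\infty$ to be a minimizer of $\overline I_\gamma$; or the measures $U_q^q\,dv_{\hat h}$ concentrate. In the latter case a Lions-type concentration argument shows the mass concentrates at a single point $p\in M$; rescaling $U_q$ around $p$ as in the first part, the equation passes in the limit to the critical Euclidean problem on $\mathbb R^{n+1}_+$, whose solutions realize the Euclidean Sobolev constant, so that $\lim\Lambda_q\ge\Lambda_\gamma(\mathbb S^n,[g_c])$, contradicting \eqref{condition-Lambda}. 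Hence concentration cannot occur, $U_\infty$ is the desired minimizer, and a maximum principle for $-\divergence(\rho^a\nabla\,\cdot\,)+E(\rho)$ together with regularity theory for this degenerate equation give $U_\infty>0$; then $w:=U_\infty|_{\rho=0}$ solves \eqref{equation0}. I expect the concentration--compactness and blow-up step to be the main obstacle: excluding bubbling requires a careful rescaling analysis of the degenerate elliptic extension problem and control of the lower-order terms $E(\rho)$ and $\bar g-(\text{flat})$ near the concentration point so that the Euclidean model is correctly recovered --- precisely the point where understanding the behaviour at infinity of $g^+$ becomes essential.
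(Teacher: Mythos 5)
This proposition is quoted verbatim from \cite{gonzalezqing} and the present paper gives no proof of its own; your outline --- concentrating bubble test functions near a boundary point for the upper bound, and subcritical approximation plus a rescaling/concentration argument ruled out by the strict inequality for existence --- is precisely the scheme used in that reference. The sketch is correct in its architecture; the only caveats are the ones you already flag, namely the control of $E(\rho)$ near the boundary (where the hypothesis $H=0$ enters for $\gamma\in(\tfrac12,1)$) and the coercivity of the quadratic form, which in \cite{gonzalezqing} rests on the spectral assumption $\lambda_1(-\Delta_{g^+})>\tfrac{n^2}{4}-\gamma^2$.
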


The question is now when the strict inequality is attained. Note that when $\gamma=1/2$, in the conformally compact Einstein setting, the lower order term has a very simple expression $E(\rho)=\frac{n-1}{4n}R_{\bar{g}}$ and the functional simply reduces to
\begin{equation*}
\overline I_{1/2}[U,\hat h]=\int_{X^{n+1}}\frac{\left(|\nabla U|_{\bar{g}}^2+\tfrac{n-1}{4n}R_{\bar g}U^2\right)\,dv_{\bar{g}}}
{(\int_{M^n}U^{2^{*}}\,dv_{\hat{h}})^{2/{2^{*}}}}.
\end{equation*}
Thus the $1/2$-Yamabe problem is almost exactly the boundary problem proposed by Escobar in \cite{escobar} and later studied by Marques \cite{marques}, Han and Li \cite{Han-Li} and Brendle \cite{Brendle}, for instance.  The problem consists of looking for a conformal metric  on $(X,\bar g)$ of zero scalar curvature and constant mean curvature on the boundary. Escobar \cite{escobar} considered the case that $M$ as a non-umbilic point for dimensions $n>5$, and some other particular cases. Marques completed the umbilic case for large dimensions under some non-vanishing conditions on the Weyl tensor.

For the fractional Yamabe problem, the only case that has been studied so far is when $M$ contains a non-umbilic point under some dimension and curvature restrictions (see \cite{gonzalezqing}). In particular, there it is assumed that
\ba\label{condition}\rho^{-2}\left(R[g^{+}]-Ric[g^{+}](\rho\partial\rho)+n^2\right),
~\mbox{as}~\rho\rightarrow 0.\ea
Their main result is the construction of a suitable test function near the non-umbilic point satisfying
\begin{equation*}
\Lambda_{\gamma}(M,[\hat{h}])<\Lambda_{\gamma}(\mathbb S^n,[g_c])
\end{equation*}
and hence the $\gamma-$Yamabe problem is solvable for $\gamma\in (0,1)$. Note that condition \eqref{condition} is an intrinsic curvature condition of an asymptotically hyperbolic manifold, which is independent of the choice of geodesic defining functions.

On the other hand, compactness and asymptotic behavior results for Palais-Smale sequences for fractional Laplacian equations with critical nonlinearities such as \eqref{equation0} were considered in  \cite{Palatucci-Pisante:1,Palatucci-Pisante:2,Fang-Gonzalez}.\\

The main purpose of this paper is to use Marques results in \cite{marques} on the umbilic case in order to give further results on the solvability of the fractional Yamabe problem for any $\gamma\in(0,1)$. In the proof we need to use the construction of conformally compact Einstein metrics with prescribed conformal infinity by Fefferman and Graham \cite{Fefferman-Graham}.

 \begin{Theorem}\label{theorem1}
 Fix $n\ge 5$. Suppose that $({X}^{n+1},g^{+})$ is an $(n+1)$-dimensional asymptotically hyperbolic manifold with conformal infinity $(M,[\hat h])$ satisfying
 \begin{align}
&\hat h^{ij} F_{ij}|_{\rho=0}=0, \label{F-trace}\\
&\partial_\rho F|_{\rho=0}=0, \label{F-derivative}\\
&\hat h^{ij}\partial_{\rho\rho\rho} F_{ij}|_{\rho=0}=0,\label{F-third}
\end{align}
where $F$ is the tensor
$$F[g^+]=\rho(Ric[g^+]+ng^+).$$
Assume that $M$ is umbilic. Then if there is a point $p\in M$ such  that
\begin{equation}\label{hypothesis1}Ric_{\rho\rho,\rho}[\bar g](p)<0,\end{equation}
then
\begin{equation*}
\Lambda_{\gamma}(M,[\hat{h}])<\Lambda_{\gamma}(\mathbb S^n,[g_c]),
\end{equation*}
\no
where ${Ric}[\bar g]$ is the Ricci tensor for the metric $\bar{g}=\rho^2g^{+}$
and $\rho$ is the geodesic defining function that appears in the normal form of $g^+$  with respect to the choice of conformal representative $\hat h$ in the  conformal infinity.
 \end{Theorem}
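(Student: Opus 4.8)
The plan is to establish the strict inequality \eqref{condition-Lambda}, which by Proposition \ref{previous-work} together with $\overline\Lambda_\gamma(X,[\hat h])=\Lambda_\gamma(M,[\hat h])$ is enough. One tests the functional \eqref{barI} against a bubble concentrating at the point $p$ of hypothesis \eqref{hypothesis1}. Since $\overline\Lambda_\gamma$ and $\overline I_\gamma$ are invariant under conformal changes of $\hat h$, I would first fix a convenient gauge: using that $M$ is umbilic, pass to a conformal representative of $[\hat h]$ and to conformal Fermi coordinates centred at $p$ in the sense of Escobar \cite{escobar} and Marques \cite{marques}, so that in the geodesic normal form \eqref{normal} one has $\bar g=d\rho^2+h_\rho$, the boundary is totally geodesic to high order at $p$ (no $\rho$-linear term in $h_\rho$), and $\sqrt{\det h_0}=1+O(|x|^N)$ along $M$; thus near $p$ we may write $h_\rho(x)=\hat h(x)+\rho^2h^{(2)}(x)+\rho^3h^{(3)}(x)+\cdots$. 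Here the Fefferman--Graham construction \cite{Fefferman-Graham} is used: the conformally invariant conditions \eqref{F-trace}--\eqref{F-third} on $F[g^+]=\rho(Ric[g^+]+ng^+)$, which measure how far $g^+$ is from being Einstein, are combined with the formal relations (from the Einstein system) between the Taylor coefficients of $h_\rho$, the curvature of $\hat h$, and the components of $F$, to force all the expansion data that enter the computation below to agree with those of a conformally compact Einstein metric --- in particular $\mathrm{tr}_{\hat h}h^{(2)}(p)=0$ --- with the sole exception of $\mathrm{tr}_{\hat h}h^{(3)}(p)$. Since a direct computation from $\bar g=d\rho^2+h_\rho$ gives $Ric_{\rho\rho,\rho}[\bar g](p)=-3\,\mathrm{tr}_{\hat h}h^{(3)}(p)$, the hypothesis \eqref{hypothesis1} is exactly a sign condition on this one remaining free coefficient.

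Next I would take $w_\mu(x)=(\mu/(|x|^2+\mu^2))^{(n-2\gamma)/2}$, the extremal of \eqref{Euclidean-Sobolev}, let $U_\mu$ be its $\mathbb{R}^{n+1}_+$-Poisson extension, and use as test function a cut-off of $U_\mu$ supported in a fixed small half-ball around $p$, transplanted via the Fermi coordinates. The core is the expansion as $\mu\to 0$ of the numerator
$$d_\gamma^{*}\int_X\big(\rho^a|\nabla U_\mu|_{\bar g}^2+E(\rho)U_\mu^2\big)\,dv_{\bar g}.$$
Expanding $|\nabla U_\mu|^2_{\bar g}$, $dv_{\bar g}$ and $E(\rho)$ about their flat counterparts, with corrections governed by the curvature of $\hat h$, by $h^{(2)},h^{(3)}$, and by the curvature formula for $E$ in Lemma \ref{lemma1}, the leading term reproduces $S(n,\gamma)^{-1}$ times the denominator $(\int_M U_\mu^{2^{*}}\,dv_{\hat h})^{2/2^{*}}$, i.e.\ $\Lambda_\gamma(\mathbb{S}^n,[g_c])$ by \eqref{Sobolev-sphere}. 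The gauge choices are arranged so that every correction of order below $\mu^3$ cancels: the curvature of $\hat h$ enters only through $R_{\hat h}(p)=0$ (conformal Fermi coordinates) because the profile is radial, the trace-free parts of $h^{(2)},h^{(3)}$ average to zero against the radial profile, $\mathrm{tr}_{\hat h}h^{(2)}(p)=0$, and the singular part of the potential cancels. Concretely, using the normal-form identity $n(n+1)+R[g^+]=\rho^2R[\bar g]+2n\rho\,\Delta_{\bar g}\rho$ (valid since $|\nabla\rho|_{\bar g}\equiv 1$) and $\Delta_{\bar g}\rho=\tfrac12 h^{ij}\partial_\rho h_{ij}$, one finds near $p$ that $E(\rho)=\tfrac{n-2\gamma}{4}\,Ric_{\rho\rho,\rho}[\bar g](p)\,\rho^{a+1}+(\text{higher order and radially odd})$. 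Hence the first non-vanishing correction is $c_n\,Ric_{\rho\rho,\rho}[\bar g](p)\,\mu^3$, where $c_n$ is a universal constant built from a finite number of convergent weighted integrals of the model bubble over $\mathbb{R}^{n+1}_+$ (from the potential term, from the $h^{(3)}$-correction to $|\nabla U_\mu|^2_{\bar g}$, and from the $h^{(3)}$-correction to the volume form). All of these converge, and the cut-off and far-region errors are $O(\mu^{n-2\gamma})$; since $n-2\gamma>3$ for every $\gamma\in(0,1)$ precisely when $n\ge 5$, these are negligible against $\mu^3$. Granting $c_n>0$, \eqref{hypothesis1} then yields $\overline I_\gamma[U_\mu,\hat h]=\Lambda_\gamma(\mathbb{S}^n,[g_c])+c_n\,Ric_{\rho\rho,\rho}[\bar g](p)\,\mu^3+o(\mu^3)<\Lambda_\gamma(\mathbb{S}^n,[g_c])$ for small $\mu$, which is the desired inequality.

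The main obstacle, and what makes this more than a transcription of Marques' boundary-Yamabe estimate, is the behavior at infinity of the conformally compact geometry, i.e.\ the behavior of $\bar g$ and especially of the potential $E(\rho)=\tfrac{n-1+a}{4n}\big[R[\bar g]-(n(n+1)+R[g^+])\rho^{-2}\big]\rho^a$ as $\rho\to 0$: this expression carries the a priori singular weight $\rho^{a-2}$, and it is only the cancellation of the bracket to the correct order in $\rho$ --- which relies on the geodesic normal form and on the Einstein-type structure enforced by \eqref{F-trace}--\eqref{F-third} and the Fefferman--Graham expansion --- that renders $E(\rho)$ the harmless $O(\rho^{a+1})$ expression above. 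Making those cancellations rigorous, and checking that the expansion data not pinned down by the $F$-conditions (notably the trace-free part of $h^{(3)}$) genuinely drop out against the radial profile, is the technical heart. A secondary difficulty is that, since $a=1-2\gamma$ has indefinite sign, the weighted integrals $\int\rho^a|\nabla U_\mu|^2_{\bar g}$ and $\int E(\rho)U_\mu^2$ and the errors near the part of $M$ away from $p$ must be controlled with more care than when $\gamma=\tfrac12$, and one must check that no power of $\mu$ below $\mu^3$ appears. Finally, the step I expect to be most delicate is showing $c_n>0$: the potential contribution and the $h^{(3)}$-gradient contribution push the correction in the favorable direction while the $h^{(3)}$-volume contribution pushes the other way, so one must compute the three explicit weighted integrals of the model bubble and verify that they combine with the right sign.
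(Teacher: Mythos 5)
Your strategy coincides with the paper's: reduce to the strict inequality via Proposition \ref{previous-work}, normalize with conformal Fermi coordinates at $p$ (Proposition \ref{metric1}), use \eqref{2.22} to identify $Ric_{\rho\rho,\rho}[\bar g](p)=-3\trace_{\hat h}h^{(3)}(p)$ as the one surviving third-order datum, test with a cut-off Poisson extension of the bubble, and read off a correction $c\,Ric_{\rho\rho,\rho}[\bar g](p)\,\mu^3$. Your treatment of the potential --- the identity $E(\rho)=-\tfrac{n-1+a}{4}\rho^{a-1}\partial_\rho(\det h_\rho)/\det h_\rho$ and the resulting leading behavior $\tfrac{n-2\gamma}{4}Ric_{\rho\rho,\rho}[\bar g](p)\rho^{a+1}$ --- matches the paper exactly, as does the cancellation of all orders below $\mu^3$.

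The genuine gap is that you never establish $c>0$: you write ``granting $c_n>0$'' and defer the verification, while correctly observing that the volume-form contribution has the unfavorable sign. This positivity is the entire content of the theorem; without it the test function yields nothing. The paper closes it with identity \eqref{10} of Lemma \ref{bessel1}, obtained by integrating by parts against the ODE \eqref{ode} satisfied by the Fourier profile $\phi(s)=s^{\gamma}K_{\gamma}(s)$ of $U_1$, which gives $\int_{\mathbb R^{n+1}_+}y^{a+3}|\nabla U_1|^2\,dxdy=\tfrac{3(a+2)}{2}\int_{\mathbb R^{n+1}_+}y^{a+1}U_1^2\,dxdy$. This converts the volume-form contribution $-\tfrac16 Ric_{\rho\rho,\rho}\int y^{a+3}|\nabla U_1|^2$ into $-\tfrac{a+2}{4}Ric_{\rho\rho,\rho}\int y^{a+1}U_1^2$, which combined with the potential contribution $+\tfrac{n-1+a}{4}Ric_{\rho\rho,\rho}\int y^{a+1}U_1^2$ yields the net coefficient $\tfrac{n-3}{4}>0$ times $\int y^{a+1}U_1^2$, the latter being finite and positive exactly when $n>3+2\gamma$, i.e. $n\ge5$. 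Your third contribution, coming from the $\tfrac13\bar R_{\rho i\rho j,\rho}\rho^3$ term of the inverse metric \eqref{expansioninverseg}, is proportional to $\tfrac{1}{3n}Ric_{\rho\rho,\rho}\int y^{a+3}|\nabla_x U_1|^2$ and hence favorable in sign, so it can only help (the paper discards it by a symmetry argument inherited from the Einstein case of Theorem \ref{theorem2}, where it vanishes identically). Either way, to complete your argument you must supply this Bessel-type identity, or an equivalent explicit evaluation of the weighted integrals of the model bubble, and exhibit the positive net constant; as written, the sign of the $\mu^3$ term is left open and the inequality $\Lambda_{\gamma}(M,[\hat{h}])<\Lambda_{\gamma}(\mathbb S^n,[g_c])$ is not derived.
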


\begin{Remark} As shown in Lemma \ref{lemmah3} and \eqref{2.22}, the condition of the  existence  of  point $p$ satisfying \eqref{hypothesis1} is intrinsic for $g^{+}$, and it does not depend on the choice of the representative in the conformal class $(M,[\hat h])$. Note also that condition \eqref{F-trace} is precisely \eqref{condition}. Moreover, the umbilicity condition together with \eqref{F-trace} imply that $F|_{\rho=0}=0$. In the Einstein case, $F\equiv 0$.
\end{Remark}

\begin{Theorem}\label{theorem2}
Fix $n>5+2\gamma$. Suppose that $({X}^{n+1},g^{+})$ is an $(n+1)$-dimensional conformal compact Einstein manifold with conformal infinity $(M,[\hat h])$ and such that $M$ is  umbilic. Then if there is a point $q\in M$ such that  ${W}[\hat h](q)\neq 0$,
\begin{equation*}
\Lambda_{\gamma}(M,[\hat{h}])<\Lambda_{\gamma}(\mathbb S^n,[g_c]).
\end{equation*}
Here $W[\hat h]$ stands for the Weyl tensor of the metric $\hat h$.
\end{Theorem}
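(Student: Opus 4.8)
To prove Theorem~\ref{theorem2} it suffices to construct, for $\mu>0$ small, a test function $U_\mu\in W^{1,2}(X,\rho^a)$ with
$$\overline I_\gamma[U_\mu,\hat h]<\frac{1}{S(n,\gamma)}=\Lambda_\gamma(\mathbb S^n,[g_c]),$$
since $\Lambda_\gamma(M,[\hat h])=\overline\Lambda_\gamma(X,[\hat h])\le\overline I_\gamma[U_\mu,\hat h]$ and $\Lambda_\gamma(\mathbb S^n,[g_c])=1/S(n,\gamma)$ by \eqref{Sobolev-sphere}; note that in the conformally compact Einstein case the normal form \eqref{normal} is even in $\rho$, so $H\equiv 0$ and the variational characterization of Lemma~\ref{lemma1} applies for all $\gamma\in(0,1)$. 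Since $\Lambda_\gamma(M,[\hat h])$ depends only on $[\hat h]$ and on $g^+$, we are free to replace $\hat h$ by a conformal normal coordinate representative at $q$ in the sense of Lee--Parker \cite{leeparker}; then, in $\hat h$--geodesic normal coordinates $x$ around $q$, we have $\mathrm{Ric}[\hat h](q)=0$, $R[\hat h]$ and its symmetrized covariant derivatives vanish at $q$ to high order, and $dv_{\hat h}=(1+O(|x|^N))\,dx$. As the Weyl tensor is pointwise conformally invariant, $W[\hat h](q)\neq 0$ is preserved.

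Next I would make the local geometry of $g^+$ near $q$ explicit through the Fefferman--Graham expansion \cite{Fefferman-Graham}: in the normal form $g^+=\rho^{-2}(d\rho^2+h_\rho)$ attached to this $\hat h$ one has $h_\rho=\hat h+\rho^2h^{(2)}+\rho^4h^{(4)}+O(\rho^6)$, where $h^{(2)}$ is proportional to the Schouten tensor of $\hat h$ and $h^{(4)}$ is a universal polynomial in the curvature of $\hat h$ and its derivatives; in the chosen coordinates $h^{(2)}(q)=0$ and $\mathrm{tr}_{\hat h}h^{(4)}(q)=0$. For $\bar g=\rho^2g^+=d\rho^2+h_\rho$, computing the curvature of a metric in this geodesic normal form and evaluating at $(q,0)$ gives $\mathrm{Ric}[\bar g]=0$, while $\mathrm{Riem}[\bar g]|_{(q,0)}$ reduces to the purely tangential block $W[\hat h]_{ijkl}(q)$; in particular $|\mathrm{Riem}[\bar g]|^2(q,0)=|W[\hat h](q)|^2\neq 0$. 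Moreover, since $g^+$ is Einstein we have $R[g^+]=-n(n+1)$, so the singular part of $E(\rho)$ in Lemma~\ref{lemma1} cancels and $E(\rho)=\frac{n-1+a}{4n}R[\bar g]\,\rho^a$ with $R[\bar g]$ vanishing near $q$ to the order prescribed by the coordinate choice; hence the lower order term of $\overline I_\gamma$ will be negligible at the scale of interest.

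Then I would take $U_\mu$ to be the extension into $(X,\bar g)$, in the coordinates $(x,\rho)$, of a truncated standard bubble $w_\mu(x)=\mu^{-\frac{n-2\gamma}{2}}\omega\big(\tfrac{x}{\mu}\big)$, $\omega(x)=(1+|x|^2)^{-\frac{n-2\gamma}{2}}$, cut off both away from $q$ and away from $\rho=0$. Substituting $U_\mu$ into \eqref{barI}, rescaling $x=\mu z$, $\rho=\mu y$, and expanding the coefficients of $\bar g$ about $(q,0)$ with the data above, the leading term is the Euclidean one and equals $1/S(n,\gamma)$ by the sharp inequality \eqref{Euclidean-Sobolev}; organizing the corrections by powers of $\mu$ and arguing as in Marques' treatment of the boundary Yamabe problem \cite{marques} --- with the additional weight $\rho^a$ present for $\gamma\neq\frac12$ handled as in \cite{gonzalezqing}, and with the $E$--term negligible --- in conformal normal coordinates every curvature contribution other than the Weyl one either vanishes at $q$, or is a divergence term integrating to zero, or integrates against $\omega$ to lower order, leaving
$$\overline I_\gamma[U_\mu,\hat h]=\frac{1}{S(n,\gamma)}\Big(1-c_{n,\gamma}\,|W[\hat h](q)|^2\,\mu^4+o(\mu^4)\Big),\qquad c_{n,\gamma}>0,$$
valid for $n>5+2\gamma$ --- precisely the range in which the weighted integrals $\int_{\mathbb R^{n+1}_+}y^a|z|^4|\nabla\Omega|^2\,dz\,dy$ entering the $\mu^4$ coefficient, together with the quintic ones controlling the remainder, are finite, $\Omega$ being the Poisson extension of $\omega$. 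Letting $\mu\to 0$ then gives $\overline I_\gamma[U_\mu,\hat h]<1/S(n,\gamma)$, which is the desired strict inequality.

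The step I expect to be the main obstacle is the last one: the explicit Taylor expansion in $\mu$ of the degenerate energy $\int_X\rho^a|\nabla U_\mu|_{\bar g}^2\,dv_{\bar g}$ through order $\mu^4$, and in particular the verification that the coefficient of $\mu^4$ is a \emph{negative} multiple of $|W[\hat h](q)|^2$. This requires (i) the conformal normal coordinate normalization, to annihilate the Ricci and scalar contributions at $q$; (ii) the precise form of the Fefferman--Graham coefficients $h^{(2)},h^{(4)}$, so that $\bar g$ inherits these normalizations along the collar and the only surviving curvature invariant is $|W[\hat h](q)|^2$; (iii) the dimension bound $n>5+2\gamma$, for integrability of the relevant weighted integrals and for the remainder to be genuinely $o(\mu^4)$; and (iv) control of the non-compact collar direction $\rho\in(0,\rho_0)$, so that the cutoff errors are of lower order. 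Granting the corresponding computation of \cite{marques}, carried out in the weighted framework of \cite{gonzalezqing}, the remaining steps are routine.
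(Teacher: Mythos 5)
Your overall strategy coincides with the paper's: conformal normal coordinates at $q$ in the sense of Lee--Parker, the Fefferman--Graham normal form for the Einstein filling, a truncated bubble extension as test function, and an expansion of $\overline I_\gamma$ in powers of $\mu$ whose $\mu^4$ coefficient should be a negative multiple of $|W[\hat h](q)|^2$. However, the step you defer --- ``granting the corresponding computation of Marques'' --- is the entire content of the proof, and it cannot be granted: Marques treats the unweighted case $a=0$, while here the sign of the $\mu^4$ coefficient depends on the explicit values of weighted integrals such as $\int y^{a+2}x_ix_j\,\partial_iU\,\partial_jU$, $\int y^{a+2}x_1^2(\partial_yU)^2$ and $\int y^{a}x_1^2U^2$. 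The paper computes these via the Fourier representation $\hat U(\zeta,y)=\hat w(\zeta)\phi(|\zeta|y)$ with $\phi(s)=s^{\gamma}K_{\gamma}(s)$ and a family of Bessel-function identities (Lemmas \ref{bessel1} and \ref{technical}); the surviving $\mu^4$ coefficient is $R_{,ii}[\hat h](q)\cdot\frac{1}{8(n-1)}\left[(4-n)I_3-I_5+(n-1+a)I_7\right]$, where $R_{,ii}[\hat h](q)=-\frac{1}{6}|W(q)|^2$ by the Lee--Parker normalization, and one must verify that the bracket equals $\theta(n,a)\int y^{a+2}U_1^2$ with $\theta(n,a)>0$, an explicit polynomial check. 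Note also that $|W|^2$ enters through $\Delta R[\hat h](q)$, i.e.\ through $\overline{Ric}_{\rho\rho,kk}$ and $\bar R_{\rho i\rho j,ij}$ at $q$, not through the pointwise norm of the curvature of $\bar g$ at $(q,0)$ as you suggest.

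More seriously, your claim that the lower-order term $\int_X E(\rho)V_\mu^2\,dv_{\bar g}$ is ``negligible at the scale of interest'' is false and would break the proof. Although $R[\bar g](q)=0$, its second tangential derivatives do not vanish, and this term contributes $\frac{n-1+a}{4}\overline{Ric}_{\rho\rho,kk}\,I_7\,\mu^4$, i.e.\ exactly the $(n-1+a)I_7$ summand of the bracket above. Since $I_3,I_5,I_7>0$ and $4-n<0$, the partial sum $(4-n)I_3-I_5$ coming from the gradient term alone is negative, and multiplied by $R_{,ii}<0$ it would give a \emph{positive} $\mu^4$ correction to the energy --- the wrong sign. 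It is precisely the $E(\rho)$ contribution $(n-1+a)I_7$ that makes the bracket positive and hence the $\mu^4$ coefficient negative. The term you discard is the one that saves the argument.
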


\begin{Remark}
The condition on the Weyl tensor in the theorem above is also conformal invariant on $M$. As we will see in the proof of this theorem, it is enough to assume that the first and third terms $h^{(1)}$ and $h^{(3)}$ in the expansion of the metric $h_\rho$ vanish, which is weaker than the Einstein condition.
\end{Remark}

The idea of the proof of both theorems is to find a suitable test function to calculate the value of the functional \eqref{barI} and compare it to its value on the sphere. The first step is to choose a  particular background metric $(X,\bar g)$ with very precise asymptotic behavior near $p$. However, in contrast to the works of Escobar \cite{escobar} and Marques \cite{marques} on the $1/2$-Yamabe problem, where they are free to choose conformal Fermi coordinates on the whole extension manifold $(X,\bar g)$, our freedom of choice of metrics is restricted to the boundary. Once $h_1\in [\hat h]$ is chosen, then the metric $\bar g_1$ is uniquely given by the defining function $\rho_1$ appearing in the normal form \eqref{normal}, i.e., $\bar g_1=(\rho_1)^2\bar g$. Hence we will make some assumptions on the behavior of the asymptotically hyperbolic manifold in order to have a suitable background metric on the conformal infinity, and we will develop some generalized conformal Fermi coordinates.

 \section{Suitable conformal Fermi coordinates}

We fix $(X^{n+1},\bar g)$ a smooth Riemannian manifold with boundary $M^n$, and let $\hat h=\bar g|_M$. As we have mentioned in the introduction, we need to choose a very particular background metric for $X$ near an umbilic point $p\in M$.

We follow the notation from \cite{marques}. Throughout this section we will make use of the index notation for tensors; commas will denote covariant differentiation. When dealing with manifolds with boundary, we will use the indices $1\le i,j,k,l,m,p,r,s\le n$ and $1\le a,b,c,d\le n+1$. The Greek letters $\alpha$ and $\beta$ will be multiindices. In Fermi coordinates on a neighborhood $M\times [0,\epsilon)$ the letter $t$ will refer to the normal direction to $M$, and we can write
$$\bar g=\hat h+h^{(1)}t+h^{(2)}t^2+h^{(3)}t^3+h^{(4)} t^4+o(t^4).$$
In particular, $h^{(1)}$ is the second fundamental form on $M$ (up to a constant factor), and the mean curvature (up to a constant factor) is given by
\begin{equation*}\label{mean-curvature}H=\frac{1}{n}\trace_{\hat h} h^{(1)}.\end{equation*}
We say that a point $p\in M$ is umbilic if the tensor $T_{ij}=h_{ij}^{(1)}-H\bar{g}_{ij}$ vanishes at $p$.

We will denote $\nabla$ the covariant derivative and by $R_{abcd}$ the full Riemannian curvature tensor. The Ricci tensor will be denoted by $Ric_{ab}$, the scalar curvature by $R$. The Weyl tensor will be denoted by $W$. Tensors in the metric $\bar g$ will be over-lined; an object without lines will be given with respect to the boundary metric $\hat h$. We will also use the definition
$$  \Sym_{i_1...i_r}T_{i_1...i_r}=\frac{1}{r!}\sum_{\sigma}T_{i_{\sigma(1)}...i_{\sigma(r)}},$$
where $\sigma$ ranges over all the permutations of the set $\{1,...,r\}$.

We finally recall that
\begin{equation}\label{geometry1}\trace_{\hat h} h^{(2)}=-2Ric[\bar g](\partial_\rho)+\tfrac{1}{2}\|h^{(1)}\|^2.\end{equation}
and
\begin{equation}\label{geometry2}
R[\bar{g}]
=2Ric[\bar{g}](\partial_{\rho})+R[\hat{h}]+\tfrac{1}{4}\left(\|h^{(1)}\|_{\hat{h}}^{2}-H^2\right).
\end{equation}

The following  lemma is about expansions for the metric  $\bar{g}_{ij}$ under an additional hypothesis on the second fundamental form at $p\in \partial X$,

\begin{Lemma}[\cite{marques}]\label{inverseg}
Suppose ${\nabla}_{\alpha}h_{ij}^{(1)}=0$ at $p\in \partial M$ for every $|\alpha|\le 3$. Then, in Fermi coordinates around $p$,
\begin{equation}\label{expansioninverseg}\begin{split}
\bar{g}^{ij}(x,t)&=\delta_{ij}+\frac{1}{3}{R}_{ikjl}x_kx_l+\bar{R}_{titj}t^2\\
&+\frac{1}{6}{R}_{ikjl,m}x_kx_lx_m+\bar{R}_{titj,k}t^2x_k+\frac{1}{3}\bar{R}_{titj,t}t^3\\
&+\left(\frac{1}{20}{R}_{ikjl,mp}+\frac{1}{15}{R}_{iksl}{R}_{jmsp}\right)x_kx_lx_mx_p\\
&+\left(\frac{1}{2}\bar{R}_{titj,kl}+\frac{1}{3}\Sym_{ij}({R}_{iksl}\bar{R}_{tstj})\right)t^2x_kx_l\\
&+\frac{1}{3}\bar{R}_{titj,tk}t^3x_k\\
&+\frac{1}{12}(\bar{R}_{titj,tt}+8\bar{R}_{tits}\bar{R}_{tstj})t^4+O(r^5),
\end{split}\end{equation}
where $r=|(x,t)|$, and the curvatures are evaluated at $p$.
In addition,
\begin{equation}\label{determinant1}\begin{split}
\det\bar{g}&=1-\tfrac{1}{6}Ric_{kl,m}x_k x_l x_m -\overline{Ric}_{tt,k}t^2 x_k\\
&-\tfrac{1}{3}\overline{Ric}_{tt,t}t^3+\left(-\tfrac{1}{20}Ric_{kl,mp}
-\tfrac{1}{90}R_{iksl}R_{imsp}\right)x_kx_lx_mx_p\\
& -\tfrac{1}{2}\overline{Ric}_{tt,kl} t^2x_k x_l-\frac{1}{3}\overline{Ric}_{tt,t k}t^3x_k\\
& +\tfrac{1}{24}(-2\overline{Ric}_{tt,tt}-4(\overline{R}_{t it j})^2)t^4+O(|(x,t)|^5).
\end{split}\end{equation}

\end{Lemma}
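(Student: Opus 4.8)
This is Marques' computation \cite{marques}, and the argument is a fourth-order Taylor expansion in Fermi coordinates around $p$ followed by matrix inversion; below I outline the steps I would carry out. Throughout, let $(x,t)=(x_1,\dots,x_n,t)$ be Fermi coordinates in which $t$ is $\bar g$-distance to $M$ and $x$ are $\hat h$-normal coordinates on $M$ centered at $p$; by the Gauss lemma $\bar g=dt^2+\bar g_{ij}(x,t)\,dx^idx^j$, so $\det\bar g=\det(\bar g_{ij})$ and the full inverse has $\bar g^{tt}=1$, $\bar g^{ti}=0$, which reduces both computations to the $n\times n$ block.

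The first step is to expand $\bar g_{ij}(x,t)$ to order $r^4$, $r=|(x,t)|$, using three inputs. The purely tangential part is $\bar g_{ij}(x,0)=\hat h_{ij}(x)$, which has the classical Riemann normal-coordinate expansion $\hat h_{ij}=\delta_{ij}-\tfrac13 R_{ikjl}x_kx_l-\tfrac16 R_{ikjl,m}x_kx_lx_m+(\text{a known quartic in }R)\,x_kx_lx_mx_p+O(|x|^5)$. The $t$-derivatives are governed by the Riccati equation for the shape operator of the slices $\{t=\mathrm{const}\}$: $\partial_t\bar g_{ij}|_{t=0}=h^{(1)}_{ij}$, and $\partial_t$ of the shape operator equals $-\bar R_{titj}$ plus a quadratic expression in the shape operator. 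Differentiating this relation tangentially and iterating it expresses every $\partial_x^m\partial_t^\ell\bar g_{ij}|_p$ with $\ell\ge1$ as a covariant derivative of $\bar R$ plus monomials each carrying a factor $\nabla_\alpha h^{(1)}$ with $|\alpha|\le3$ whenever $\ell+m\le4$. This is exactly where the hypothesis $\nabla_\alpha h^{(1)}_{ij}(p)=0$, $|\alpha|\le3$, enters: all those monomials vanish at $p$, so that through order $r^4$ only curvatures of $\bar g$ and of $\hat h$ remain, giving the expansion of $\bar g_{ij}$ which is the inverse of \eqref{expansioninverseg} up to the sign of the curvature terms.

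Next I would invert. Writing $\bar g_{ij}=\delta_{ij}+B_{ij}$ with $B=O(r^2)$, the Neumann series $\bar g^{ij}=\delta_{ij}-B_{ij}+(B^2)_{ij}+O(r^6)$ needs only the one correction $(B^2)_{ij}$; its order-$r^4$ part comes from squaring the order-$r^2$ term $\tfrac13 R_{ikjl}x_kx_l+\bar R_{titj}t^2$, and combined with the genuine fourth-order term of $\bar g_{ij}$ it yields the quadratic-curvature coefficients $\tfrac1{15}R_{iksl}R_{jmsp}$, $\tfrac13\Sym_{ij}(R_{iksl}\bar R_{tstj})$ and $8\,\bar R_{tits}\bar R_{tstj}$ in \eqref{expansioninverseg}. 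For $\det\bar g=\det(\delta_{ij}+B_{ij})$ I would use $\log\det(\delta+B)=\trace\big(B-\tfrac12 B^2+O(B^3)\big)$ and exponentiate, so $\det\bar g=1+\trace B+\tfrac12\big((\trace B)^2-\trace B^2\big)+O(r^6)$: the linear-in-curvature terms of \eqref{determinant1} come from $\trace B$ (contracting $i=j$ turns $R_{ikjl}$ into $Ric[\hat h]$ and $\bar R_{titj}$ into $\overline{Ric}_{tt}$, with \eqref{geometry1} used for the $t^2$ part), while the $R_{iksl}R_{imsp}$ and $(\bar R_{titj})^2$ terms come from the correction $(\trace B)^2-\trace B^2$.

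I expect the only genuine difficulty to be bookkeeping: collecting all order-$r^4$ contributions, which split between the intrinsic fourth-order terms of the two normal-coordinate expansions and the $B^2/\log\det$ corrections, and pinning down the numerical coefficients ($\tfrac1{15},\tfrac13,\tfrac1{12},\tfrac1{90},8,-4,\dots$) and the symmetrizations. No idea beyond \cite{marques} is needed; the single point that must be verified is that the hypothesis $\nabla_\alpha h^{(1)}_{ij}(p)=0$ for $|\alpha|\le3$ is precisely what kills every second-fundamental-form contribution through order four.
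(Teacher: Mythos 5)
The paper offers no proof of this lemma at all---it is quoted directly from Marques \cite{marques}---and your outline is precisely the derivation given there: the Riemann normal-coordinate expansion of $\hat h$ on the slice $t=0$, the Riccati equation for the shape operator to convert $t$-derivatives of $\bar g_{ij}$ into curvatures plus terms carrying $\nabla_\alpha h^{(1)}$ (which the hypothesis kills through order four), Neumann-series inversion for $\bar g^{ij}$, and the $\log\det$ expansion for the determinant. The approach is correct and essentially the same as the cited source; the only substance you defer is the coefficient bookkeeping, which is exactly what the reference to \cite{marques} is standing in for.
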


Next, as we have mentioned in the introduction, given an asymptotically hyperbolic manifold $(X^{n+1},g^{+})$ and a representative $\hat{h}$ of the conformal infinity $(M^{n},[\hat{h}])$, one can find a geodesic defining function $\rho$ such that in a neighborhood $M\times(0,\epsilon)$ of $X$ the metric $g^+$
has the form
\begin{equation}\label{normal-form}
 g^{+}=\frac{d{\rho}^{2}+h_{\rho}}{\rho^2},
 \end{equation}
where $h_\rho$ is a 1-parameter family of metrics on $M$ satisfying $h_0=\hat h$. We say that such $g^+$ is written in normal form.
We write
\ba\label{compacted}
\bar{g}=\rho^2g^{+}=d\rho^2+h_{\rho}=d\rho^2+\hat{h}+h^{(1)}\rho+h^{(2)}\rho^2
+h^{(3)}\rho^3+h^{(4)}\rho^4+o(\rho^4)
\ea
near the conformal infinity. One may define an umbilic point $p\in M$ for the asymptotically hyperbolic case if such point is umbilic with respect to this metric $\bar g$.
If every point at the boundary is umbilic, we say that the asymptotically hyperbolic manifold has umbilical boundary.

Note that the set of umbilic points of the boundary is a conformal invariant. Assume that we are given  $\rho$ and $\tilde \rho$ two different geodesic defining functions of $M$ in $X$ associated with
representatives $\hat h$ and $\tilde h$ of the conformal infinity
$(M^n, \ [\hat h])$, respectively. We may write
$$
g^+ = \rho^{-2}(d\rho^2 + h_\rho) = \tilde\rho^{-2}(d\tilde\rho^2 +
\tilde h_{\tilde\rho})
$$
near $M$,
where
$$
h_\rho = \hat h + \rho h^{(1)} + O(\rho^2),\quad
\tilde h_{\tilde\rho} = \tilde h + \tilde\rho \tilde h^{(1)} +
O(\tilde\rho^2)
$$
near the conformal infinity. Then it was proven in \cite{gonzalezqing} that
$$
\tilde h^{(1)} = \left.(\tilde{\rho}/{\rho})\right|_{\rho=0} h^{(1)} \text{\quad on $M$}.
$$
In particular
$$
H  = \left.(\tilde\rho/\rho)\right|_{\rho = 0} \tilde H  \text{\quad on
$M$}.
$$

 In the following lemmas we will present some technical results on the expansion on the metric written in normal form \eqref{normal-form}-\eqref{compacted} near the conformal infinity under some extra geometric assumptions. These will be needed in the proof of the main proposition in this section.

\begin{Lemma}\label{lemma-normal-form} Suppose that $(X^{n+1}, g^+)$
is an asymptotically hyperbolic manifold and $\rho$ is a geodesic
defining function associated with a representative $\hat h$ of the
conformal infinity $(M^n, [\hat h])$ such that $g^+$ is written in normal form.
Assume that $X$ has umbilical boundary, and that \eqref{F-trace}-\eqref{F-third} hold.
Then, for every point on the boundary $\rho=0$,
\begin{align}
&\label{mean-cero} H =0,\quad h^{(1)}=0,\\
&\label{h2} \trace_{\hat h}h^{(2)} = \frac{R[\hat h]}{2(1-n)}\\
\label{h22}
&h^{(2)}=\frac{R[\hat{h}]\hat{h}+2(1-n)Ric[\hat{h}]}{2(n-2)(n-1)},\\
&\trace_{\hat{h}}h^{(4)}
=\frac{R[h_{\rho}]_{,\rho\rho}|_{\rho=0}-2(n-2)\|h^{(2)}\|_{\hat{h}}^{2}}{8(2-n)}.
\label{h4-trace}
\end{align}
\end{Lemma}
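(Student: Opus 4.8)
\medskip
\noindent\emph{Proof sketch.} The plan is to rewrite the tensor $F[g^+]=\rho\,(Ric[g^+]+ng^+)$ entirely in terms of the normal form \eqref{compacted} and then to read off the hypotheses \eqref{F-trace}--\eqref{F-third} as the vanishing of the first Taylor coefficients of $h_\rho$ in $\rho$. Since $\rho$ is a geodesic defining function, $|d\rho|_{\bar g}\equiv1$ in a collar of $M$ and $\bar g=d\rho^2+h_\rho$, so the $\bar g$-Hessian of $\rho$ is purely tangential and equals $A_{ij}:=\tfrac12\partial_\rho(h_\rho)_{ij}$, the second fundamental form of the level sets of $\rho$, while $\bar\Delta\rho=\trace_{h_\rho}A=\tfrac12\partial_\rho\log\det h_\rho$. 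Inserting this into the conformal transformation law for the Ricci tensor under $g^+=\rho^{-2}\bar g$, all singular $\rho^{-1},\rho^{-2}$ terms cancel (as they must, $F$ being smooth up to $\rho=0$ by \eqref{compacted}) and the tangential part of $F$ reduces to
\begin{equation*}
F_{ij}=\rho\,Ric[\bar g]_{ij}+(n-1)A_{ij}+\big(\trace_{h_\rho}A\big)(h_\rho)_{ij}.
\end{equation*}
I would then substitute \eqref{compacted} together with the radial identities for $\bar g=d\rho^2+h_\rho$, namely $Ric[\bar g]_{ij}=Ric[h_\rho]_{ij}-\partial_\rho A_{ij}-(\trace_{h_\rho}A)A_{ij}+2(A^2)_{ij}$ and, contracting, $h_\rho^{ij}Ric[\bar g]_{ij}=R[h_\rho]-\partial_\rho(\trace_{h_\rho}A)-(\trace_{h_\rho}A)^2$, as well as the inverse metric expansion, which once $h^{(1)}=0$ is known reads $h_\rho^{ij}=\hat h^{ij}-\rho^2(h^{(2)})^{ij}-\rho^3(h^{(3)})^{ij}+O(\rho^4)$ (indices raised with $\hat h$). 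Noting $\partial_\rho^k F|_{\rho=0}=k!\,(\rho^k\text{-coefficient of }F)$, the three hypotheses say respectively that the $\hat h$-trace of the $\rho^0$-coefficient, the full $\rho^1$-coefficient, and the $\hat h$-trace of the $\rho^3$-coefficient of $F_{ij}$ all vanish; since everything is pointwise, the conclusions then hold at every point of $M$.

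\textbf{Orders $0$ and $1$.} The $\rho^0$-coefficient is $F_{ij}|_{\rho=0}=\tfrac{n-1}{2}h^{(1)}_{ij}+\tfrac n2 H\,\hat h_{ij}$; tracing and using \eqref{F-trace} forces $(2n-1)H=0$, hence $H=0$, and then umbilicity $h^{(1)}=H\hat h$ gives $h^{(1)}\equiv0$, which is \eqref{mean-cero}. With $h^{(1)}=0$, the $\rho^1$-coefficient of $F_{ij}$ equals $Ric[\hat h]_{ij}+(n-2)h^{(2)}_{ij}+(\trace_{\hat h}h^{(2)})\hat h_{ij}$; hypothesis \eqref{F-derivative} makes it vanish, its $\hat h$-trace is \eqref{h2}, and solving the resulting tensor equation for $h^{(2)}$ yields \eqref{h22}.

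\textbf{Order $3$, and the main obstacle.} This is where the genuine computation lies. Since $h^{(1)}\equiv0$, the first $\rho$-derivative of the slice scalar curvature vanishes, so $R[h_\rho]=R[\hat h]+\tfrac12 R[h_\rho]_{,\rho\rho}|_{\rho=0}\,\rho^2+O(\rho^3)$; one then expands $\hat h^{ij}F_{ij}$ to order $\rho^3$. The factor $\trace_{h_\rho}A=\tfrac12\partial_\rho\log\det h_\rho$ contributes $\trace_{\hat h}h^{(2)}\rho+\tfrac32\trace_{\hat h}h^{(3)}\rho^2+(2\trace_{\hat h}h^{(4)}-\|h^{(2)}\|_{\hat h}^2)\rho^3+O(\rho^4)$, and $\hat h^{ij}Ric[\bar g]_{ij}$ must be carried to order $\rho^2$, which is where $R[h_\rho]_{,\rho\rho}|_{\rho=0}$, $\|h^{(2)}\|_{\hat h}^2$, $(\trace_{\hat h}h^{(2)})^2$ and $\langle h^{(2)},Ric[\hat h]\rangle_{\hat h}$ enter; the last of these is eliminated via \eqref{h22}, while the odd coefficient $h^{(3)}$ drops out of the $\rho^3$-coefficient of the \emph{trace} by parity and so is never needed. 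Imposing \eqref{F-third}, i.e.\ setting this $\rho^3$-coefficient to zero, gives a single linear equation for $\trace_{\hat h}h^{(4)}$ whose solution is \eqref{h4-trace}. The main obstacle is exactly the bookkeeping in this last step: one has to carry the radial Gauss--Riccati expansions of $Ric[\bar g]$ and of $R[h_\rho]$ to second order in $\rho$ while simultaneously expanding $h_\rho^{-1}$, and then watch the various scalar invariants of $\hat h$ collapse into the compact right-hand side of \eqref{h4-trace}. Steps one through three are comparatively short, though the rewriting of $F$ in the first step must be done carefully to see the cancellation of the singular terms. \endproof
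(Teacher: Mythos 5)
Your argument is correct and is essentially the paper's own proof in disguise: the identity $F_{ij}=\rho\,Ric[\bar g]_{ij}+(n-1)A_{ij}+(\trace_{h_\rho}A)(h_\rho)_{ij}$ that you derive from the conformal transformation of the Ricci tensor becomes, once the radial Gauss--Riccati equations for $\bar g=d\rho^2+h_\rho$ are inserted, precisely (a constant multiple of) Graham's equation (2.5) which the paper quotes as \eqref{formula-graham}, and extracting the Taylor coefficients of orders $0$, $1$ and $3$ is the same as the paper's differentiating that equation $0$, $1$ and $3$ times at $\rho=0$ and tracing. The one point to make precise in your order-three sketch is the claim that $\langle h^{(2)},Ric[\hat h]\rangle_{\hat h}$ is ``eliminated via \eqref{h22}'': carried out, your bookkeeping yields $\trace_{\hat h}h^{(4)}$ with $\hat h^{ij}Ric_{ij}[h_\rho]_{,\rho\rho}|_{\rho=0}$ in the numerator, which coincides with the $R[h_\rho]_{,\rho\rho}|_{\rho=0}$ written in \eqref{h4-trace} only up to the term $-2\langle h^{(2)},Ric[\hat h]\rangle_{\hat h}$ --- an identification the paper itself makes tacitly and which is harmless in the sequel because the lemma is applied at the conformal-normal-coordinate point where $Ric[\hat h](p)=0$.
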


\begin{proof} The ideas come from \cite{gonzalezqing} and go back to the work of Fefferman and Graham \cite{Fefferman-Graham} on the construction of Einstein metrics with prescribed conformal infinity. Recall formula (2.5) from \cite{graham}
\begin{equation}\label{formula-graham}
 \rho h_{ij}'' + (1-n)h_{ij}'  - h^{kl}h_{kl}'h_{ij} - \rho h^{kl}h_{ik}'h_{jl}' +
\frac 12 \rho h^{kl}h_{kl}' h_{ij}'- 2\rho Ric_{ij}[h_\rho]  =
F_{ij}, \end{equation}
where $h_{ij}$ denotes the tensor $h:=h_{\rho}$, derivation $'$ denotes $\partial_{\rho}$, and $Ric[h_{\rho}]$ denotes the Ricci tensor of $h_{\rho}$ with $\rho$ fixed.

In the first step, taking trace in \eqref{formula-graham} with respect to $h_\rho$ gives
\begin{equation*} \rho\trace_h h''  + (1-2n)\trace_{h}h'- \rho\|h'\|_h^2 +
\tfrac 1 2 \rho (\trace_h h')^2- 2\rho R[\hat h] = \trace_{h_\rho} F,\end{equation*}
which implies, using \eqref{F-trace}, that
$$\trace_{\hat h} h^{(1)}=0\quad \text{at }\rho=0.$$
Together with the umbilicity condition we can conclude \eqref{mean-cero} and, as a particular consequence, $F|_{\rho=0}= 0$.

Next, we differentiate \eqref{formula-graham} with respect to $\rho$ and set $\rho=0$. We obtain
\begin{equation}\label{first-derivative}
(2-n)h''_{ij}+\|h'\|^2 h_{ij}-(\trace_{\hat h}{h''})h_{ij}-\tfrac{1}{2}(\trace_{\hat h}{h'})h'_{ij}
-h^{kl}h'_{ik}h'_{jl}-2R_{ij}=\partial_\rho F_{ij} \mbox{ at }\rho=0.
\end{equation}
Taking the trace, and using  \eqref{mean-cero} and that we are umbilic we arrive at
\begin{equation*}
\trace_{\hat{h}}h''=\frac{R[\hat{h}]}{1-n} \mbox{ at }\rho=0,
\end{equation*}
which immediately yields \eqref{h2}.
As a consequence, we also have from \eqref{first-derivative}, recalling that we are in the umbilic boundary case and \eqref{F-derivative}, that
\begin{equation*}
h^{(2)}=\frac{R[\hat{h}]\hat{h}+2(1-n)Ric[\hat{h}]}{2(n-2)(n-1)}.
\end{equation*}

Differentiating \eqref{formula-graham} three times and setting $\rho=0$ (again, recalling that we are umbilic so all the terms with $h'_{ij}$ drop out) gives
\begin{equation}\label{h4}
24(4-n)h^{(4)}_{ij}+12\|h^{(2)}\|_{\hat{h}}^{2}\hat{h}_{ij}-24\trace_{\hat{h}}h^{(4)}\hat{h}
-24\hat{h}^{kl}h_{ik}^{(2)}h_{jl}^{(2)}=6Ric_{ij}[h_{\rho}]_{,\rho\rho}|_{\rho=0}
+\partial_{\rho\rho\rho} F_{ij}|_{\rho=0},
\end{equation}
where we note that
\begin{equation*}\begin{split}
h_{\rho}^{kl}&=\hat{h}^{kl}-\hat{h}^{kr}h_{rs}^{(2)}\hat{h}^{sl}\rho^2\\
&+(\hat{h}^{ks}h_{sp}^{(2)}\hat{h}^{pp'}h_{p'r}^{(2)}\hat{h}^{rl}
-\hat{h}^{ks}h_{sr}^{(4)}\hat{h}^{rl})\rho^4+O(\rho^6).
\end{split}\end{equation*}
Take trace to \eqref{h4} gives
\begin{equation*}\label{traceh4}
24(4-2n)\trace_{\hat{h}}h^{(4)}
+(12n-24)\|h^{(2)}\|_{\hat{h}}^{2}=6\hat{h}^{ij}Ric_{ij}[h_{\rho}]_{,\rho\rho}|_{\rho=0}
+\trace_{\hat h}\partial_{\rho\rho\rho} F|_{\rho=0},
\end{equation*}
from where we obtain \eqref{h4-trace}, recalling \eqref{F-third}. This completes the proof of the proposition.\\
\end{proof}

The following lemma, together with \eqref{2.22}, shows that condition \eqref{hypothesis1} is independent of the choice of representative in the conformal infinity $(M,[\hat h])$.

\begin{Lemma}\label{lemmah3}
Let  $(X^{n+1},g^{+})$ be an asymptotically hyperbolic manifold with umbilical boundary. Let $\hat h$ be another representative of the conformal class $[\hat h]$, and let $\rho$ and $\tilde \rho$ be the geodesic defining functions associated with $\hat{h}$ and $\tilde h$, respectively, such that $g^+$ is written in normal form in both cases. Assume that conditions \eqref{F-trace}-\eqref{F-third} are is satisfied. Then at $\rho=0$,
$$\trace_{\hat{h}}h^{(3)}=\trace_{\tilde{h}}\tilde{h}^{(3)}e^{3w}.$$
\end{Lemma}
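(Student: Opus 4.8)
The plan is to track how the third-order term $h^{(3)}$ in the normal-form expansion transforms under a conformal change of the representative of $(M,[\hat h])$, using the relation between the two geodesic defining functions. First I would recall, as in \cite{gonzalezqing}, that if $\hat h$ and $\tilde h=e^{2w}\hat h$ are two representatives with associated geodesic defining functions $\rho$ and $\tilde\rho$, then there is a smooth positive function on a collar neighborhood, call it $\phi(x,\rho)$, with $\tilde\rho=\phi\,\rho$ and $\phi|_{\rho=0}=e^{w}$, determined by requiring that $\tilde\rho$ be geodesic for $\tilde g^+$, i.e. $|d\tilde\rho|^2_{\tilde\rho^2 g^+}=1$ near $M$. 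Expanding this eikonal-type equation in powers of $\rho$ produces an ODE hierarchy for the Taylor coefficients $\phi^{(k)}(x)$ of $\phi$ in $\rho$; I would solve it order by order, using the hypotheses of Lemma~\ref{lemma-normal-form} (umbilicity, \eqref{F-trace}--\eqref{F-third}, hence $h^{(1)}=0$, $H=0$) to simplify. The key consequence I expect is that the first few $\phi^{(k)}$ are forced: in particular the vanishing of $h^{(1)}$ should kill the term that would otherwise mix into $h^{(3)}$ at the relevant order.

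Next I would compute $\tilde h_{\tilde\rho}$ in terms of $h_\rho$. From $\bar g=\rho^2 g^+=d\rho^2+h_\rho$ and $\widetilde{\bar g}=\tilde\rho^2 g^+=d\tilde\rho^2+\tilde h_{\tilde\rho}$ we get $\widetilde{\bar g}=\phi^2\bar g$, so on $M$-slices $\tilde h_{\tilde\rho}=\phi^2 h_\rho - (\text{the part absorbed into }d\tilde\rho^2)$; more precisely one writes $\bar g$ in the $(\tilde\rho,x)$ coordinates and reads off the tangential part. Expanding $\phi^2 h_\rho$ and re-expressing $\rho$ as a series in $\tilde\rho$ (inverting $\tilde\rho=\phi\rho$), the coefficient of $\tilde\rho^3$ in $\tilde h_{\tilde\rho}$ becomes an explicit combination of $h^{(1)},h^{(2)},h^{(3)}$ and derivatives of $w$. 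Taking the trace with respect to $\tilde h^{-1}=e^{-2w}\hat h^{-1}$ and collecting terms, all the contributions involving $w$ and involving $h^{(1)},h^{(2)}$ must cancel once we impose $h^{(1)}=0$ and the trace identities \eqref{h2}, leaving exactly $\trace_{\tilde h}\tilde h^{(3)}=e^{3w}\trace_{\hat h}h^{(3)}$. The power $e^{3w}$ is forced on dimensional grounds: $\tilde h^{(3)}$ picks up $\phi^2$ from $\widetilde{\bar g}=\phi^2\bar g$ and one more factor $\phi$ (at $\rho=0$, $e^w$) from the change $\rho\mapsto\tilde\rho$ of the argument, while the inverse metric trace contributes $e^{-2w}$; so $e^{2w}\cdot e^{w}\cdot\ldots$ but one must check the bookkeeping carefully — this is really the content of the lemma.

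The main obstacle will be the bookkeeping in the change of variables $\rho\leftrightarrow\tilde\rho$: unlike the $h^{(1)}$ transformation (which is order $\rho^1$ and only sees $\phi|_{\rho=0}$), the $h^{(3)}$ transformation a priori involves $\phi^{(1)},\phi^{(2)},\phi^{(3)}$, and generically $\trace h^{(3)}$ would \emph{not} transform conformally — it is precisely the umbilicity plus \eqref{F-trace}--\eqref{F-third} (forcing $h^{(1)}=0$ and pinning $\trace h^{(2)}$ via \eqref{h2}) that makes the anomalous terms drop. So the crux is to show those hypotheses are exactly what is needed to annihilate the $\na w$, $|\na w|^2$, $\Delta w$ terms and the $h^{(2)}$-cross terms in the $\tilde\rho^3$ coefficient of the trace. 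A clean way to organize this is to not expand everything brutally but to isolate, from the Graham formula \eqref{formula-graham} applied to both $h_\rho$ and $\tilde h_{\tilde\rho}$, the intrinsic meaning of $\trace h^{(3)}$: differentiating \eqref{formula-graham} twice and taking a trace expresses $\trace_{\hat h}h^{(3)}$ in terms of $R[h_\rho]_{,\rho}|_{\rho=0}$ and lower-order data that are already known to transform conformally, which may short-circuit the direct computation. I would try that route first, and fall back on the explicit $\phi$-expansion only if the intrinsic characterization does not immediately give the stated weight $e^{3w}$.
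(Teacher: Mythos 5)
Your plan coincides with the paper's proof: one writes $\tilde\rho=e^{w}\rho$, expands the geodesic (eikonal) condition $|d\tilde\rho|^2_{\tilde\rho^2g^+}=1$ to get $w_\rho=0$ on $M$ and $w_{\rho\rho\rho}=-\partial_\rho(|\nabla w|^2_{h_\rho})$ there, compares the $\rho^3$ coefficients of the two normal forms, and observes that the only anomalous term in the trace, $\tfrac{n}{3}w_{\rho\rho\rho}$, vanishes precisely because $h^{(1)}=0$ (so $\partial_\rho h^{ij}|_{\rho=0}=0$) and $w_\rho\equiv 0$ on $M$. The only small discrepancy is that the cancellation needs neither \eqref{h2} nor the fallback via Graham's equation that you contemplate — umbilicity plus the first-order consequence of the geodesic condition already suffice.
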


\begin{proof}
We follow \cite{graham} on the construction of the normal form for both $\hat h$ and $\tilde h$. Let $\tilde{\rho}=e^{w}\rho$ near the conformal infinity, then
$$1=|d(e^{w}\rho)|_{e^{2w}\rho^2g^{+}}^{2},$$
which implies
\begin{equation}\label{formula70}2\partial_\rho w+\rho\left[(\partial w_\rho)^2+|\nabla w|_{h_{\rho}}^2\right]=0,\end{equation}
and
\begin{equation}\label{formula71}\partial_{\rho\rho\rho}w=-\partial_{\rho}(|\nabla w|^2_{h_{\rho}}),\end{equation}
on $\rho=0$.

Next, since we write
$$ g^{+}=\frac{d\rho^2+\hat{h}+h^{(2)}\rho^2+h^{(3)}\rho^3+O(\rho^4)}{\rho^2}
 =\frac{d\tilde{\rho}^2+\tilde{h}+\tilde{h}^{(2)}\tilde{\rho}^2+\tilde{h}^{(3)}\tilde{\rho}^3
 +O(\tilde{\rho}^4)}{\tilde{\rho}^2},$$
comparing the coefficients of $\rho^3$, we must have that
$$h^{(3)}+h^{(1)}w_{\rho\rho}
+\frac{1}{3}\hat{h}w_{\rho\rho\rho}=\frac{1}{2}\tilde{h}^{(1)}e^{-w}w_{\rho\rho}+\tilde{h}^{(3)}e^{w},$$
and thus
$$\trace_{\hat{h}}h^{(3)}+\frac{n}{3}w_{\rho\rho\rho}=\trace_{\tilde{h}}\tilde{h}^{(3)}e^{3w}.$$
We claim that $w_{\rho\rho\rho}=0$ on $\rho=0$. This is so because from \eqref{formula71} we can write
$$w_{\rho\rho\rho}=-\partial_\rho \lp h^{ij}\partial_i w\partial_j w\rp=(\partial_\rho h^{ij}) \partial_i w\partial_j w+2 h^{ij}\partial_{i\rho}w\partial_j w.$$
The first term vanishes at $\rho=0$ since $h^{(1)}\equiv 0$ on $M$, while the second vanishes too because  \eqref{formula70} implies that $w_\rho\equiv 0$ on $M$.

 The proof of the lemma is completed.
\end{proof}

\medskip
Now we are ready for the main result in this section: the construction of conformal Fermi coordinates in the asymptotic hyperbolic case.

\begin{Proposition}\label{metric1}
Suppose that $(X^{n+1},g^{+})$ is asymptotically hyperbolic manifold with umbilical boundary and \eqref{F-trace}-\eqref{F-third} hold. Then given a point $p\in M$, there exists  a representative $\hat{h}$ of the conformal infinity such that, for the metric written in normal form, we have: $g^+=\rho^{-1}(d\rho^2+h_\rho)=\rho^{-2}\bar g$,
\begin{itemize}
\item[(i).] $H=0$ on $M$,
\item[(ii).] $Ric[\hat{h}](p)=0$ on $M$,
\item[(iii).] $Ric[\bar{g}](\partial_{\rho})(p)=0 $ on $M$,
\item[(iv).] $R[\bar{g}](p)=0$ on $M$,
\item[(v).] The expansion for the determinant of the metric, assuming $p$ to be the origin of the coordinate system $\{x^1,\ldots,x^n\}$ on $M$,
\begin{equation}\label{detg}\begin{split}
\det\bar{g}&=1-\tfrac{1}{3}\overline{Ric}_{\rho\rho,\rho}[\bar g](p)\rho^3\\
&+\left\{-\tfrac{1}{20}Ric_{kl,mp}[\hat h]
-\tfrac{1}{90}R_{iksl}[\hat h]R_{imsp}[\hat h]\right\}(p)\,x_kx_lx_mx_p\\
& -\tfrac{1}{2}{Ric}_{\rho\rho,kl}[\bar g](p) \rho^2x_k x_l-\frac{1}{3}{Ric}_{\rho\rho,\rho k}[\bar g](p)\rho^3x_k\\
& +\tfrac{1}{24}\left\{-2{Ric}_{\rho\rho,\rho\rho}[\bar g]-4({R}_{\rho i\rho j}[\bar g])^2\right\}(p)\,\rho^4+O(|(x,\rho)|^5).
\end{split}\end{equation}
\item[(vi).] $\Sym(R_{ij,kl}[\hat{h}]+\frac{2}{9}R_{pijm}[\hat{h}]R_{pklm}[\hat{h}])(p)=0$.
\item[(vii).] And for the derivatives of the Ricci curvature,
\begin{align}
\label{2.22} &{Ric}_{\rho\rho,\rho}[\bar g](p)=-3\trace_{\hat{h}}h^{(3)}(p),\\
&{Ric}_{\rho\rho,kk}[\bar g](p)=\frac{R_{,ii}[\hat h](p)}{2(n-1)}=-\frac{|W|^2[\hat h](p)}{12(n-1)}.\label{2.16}
\end{align}
Here $W[\hat h]$ is the Weyl tensor for the metric $\hat h$.
\end{itemize}
Moreover, if $(X,g^{+})$ is a conformally compact Einstein manifold, written in normal form as
\begin{equation*}
g^{+}=\frac{d\rho^2+h_{\rho}}{\rho^2}
=\frac{d\rho^2+\hat{h}+h^{(2)}\rho^2+h^{(4)}\rho^4+O(\rho^6)}{\rho^2},
\end{equation*}
we also have
\begin{align}\label{2.15}
&{Ric}_{\rho\rho,\rho}[\bar g](p)=0,\\
&{R}_{\rho i\rho j}[\bar g](p)=0,\label{2.17}\\
&{Ric}_{\rho\rho,\rho\rho}[\bar g](p)=0,\label{2.18}\\
&{R}_{\rho i\rho j,ij}[\bar g](p)=\frac{ R_{,ii}[\hat h](p)}{2(n-1)}=-\frac{|W|^2[\hat h](p)}{12(n-1)},\label{2.19}\\
&{R}_{,\rho\rho}[\bar g](p)=0.\label{formula84}
\end{align}
\end{Proposition}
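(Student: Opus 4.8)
The plan is to make a \emph{single} conformal change, and on the boundary only: take $\hat h\in[\hat h]$ to be the representative in \emph{conformal normal coordinates} at $p$ in the sense of Lee--Parker \cite{leeparker} (as used by Marques \cite{marques}). In $\hat h$-normal coordinates $\{x^i\}$ centered at $p$ one then has $\det\hat h_{ij}=1+O(|x|^{N})$ for any prescribed $N$, and this alone yields several of the conclusions: $Ric[\hat h](p)=0$, which is (ii); the vanishing of the symmetrized first covariant derivative of $Ric[\hat h]$ at $p$ (hence $\nabla R[\hat h](p)=0$, and the disappearance of the cubic-in-$x$ term from the normal-coordinate expansion of $\det\hat h$); the identity (vi), which is exactly the vanishing of the symmetrized quartic coefficient of $\det\hat h$; and the standard conformal-normal-coordinate relation $\Delta_{\hat h}R[\hat h](p)=-\tfrac16|W[\hat h](p)|^2$, which is what ultimately produces the Weyl terms in \eqref{2.16} and \eqref{2.19}. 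As the authors stress in the introduction, this is the only freedom available: once $\hat h$ is fixed the geodesic defining function $\rho$, and with it $\bar g=\rho^2g^+$ and all the coefficients $h^{(k)}$ in \eqref{compacted}, are determined.

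The second ingredient is Lemma \ref{lemma-normal-form}, which applies since $M$ is umbilic and \eqref{F-trace}--\eqref{F-third} hold. It gives, on the \emph{entire} boundary, $H\equiv0$ and $h^{(1)}\equiv0$ --- in particular (i), and $F|_{\rho=0}=0$ --- together with the explicit formulas $\trace_{\hat h}h^{(2)}=\tfrac{R[\hat h]}{2(1-n)}$, $h^{(2)}=\tfrac{R[\hat h]\hat h+2(1-n)Ric[\hat h]}{2(n-2)(n-1)}$, and the trace formula \eqref{h4-trace} for $h^{(4)}$. This is the step that compensates for the loss of coordinate freedom inside $X$: it is what lets one conclude the vanishing \emph{at $p$} of quantities such as $Ric[\bar g](\partial_\rho)$ and $h^{(2)}$ from the single fact $Ric[\hat h](p)=0$.

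Granting these two ingredients, items (iii), (iv), (v), (vii) are bookkeeping with the structure equations for a metric of the form $d\rho^2+h_\rho$. From \eqref{geometry1} and $h^{(1)}\equiv0$, $Ric[\bar g](\partial_\rho)$ on $M$ is a fixed multiple of $\trace_{\hat h}h^{(2)}$, hence of $R[\hat h]$, which vanishes at $p$; that is (iii), and combining with \eqref{geometry2} gives (iv). Differentiating the relation $Ric[\bar g](\partial_\rho)|_M\propto R[\hat h]$ once and twice tangentially and using $\nabla R[\hat h](p)=0$ and $\Delta_{\hat h}R[\hat h](p)=-\tfrac16|W[\hat h](p)|^2$ gives $Ric_{\rho\rho,k}[\bar g](p)=0$ and \eqref{2.16}; the identity \eqref{2.19} comes the same way once one records the radial-curvature relation $R_{\rho i\rho j}[\bar g]|_{\rho=0}=-h^{(2)}_{ij}$ (valid because $h^{(1)}\equiv0$), inserts the explicit $h^{(2)}$, and uses the contracted Bianchi identity. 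For \eqref{2.22} I would write $Ric_{\rho\rho}[\bar g]=-\tfrac12\trace_{h_\rho}\partial_\rho^2h_\rho+\tfrac14\|\partial_\rho h_\rho\|^2_{h_\rho}$ and differentiate once in $\rho$ at $\rho=0$: every term carrying $\partial_\rho h_\rho|_{\rho=0}=0$ drops, and the $\partial_\rho^3h_\rho|_{\rho=0}=6h^{(3)}$ term produces $-3\,\trace_{\hat h}h^{(3)}$. Finally, since $h^{(1)}\equiv0$ we have $\nabla_\alpha h^{(1)}_{ij}(p)=0$ for $|\alpha|\le3$, so Lemma \ref{inverseg} applies; substituting into \eqref{determinant1} the vanishing of $\Sym_{klm}Ric_{kl,m}[\hat h](p)$ and of $Ric_{\rho\rho,k}[\bar g](p)$ removes exactly the cubic-in-$x$ and the $\rho^2x$ terms and leaves \eqref{detg}.

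In the conformally compact Einstein case the additional input is that $h_\rho$ contains only even powers, so $h^{(1)}=h^{(3)}=0$ identically and \eqref{2.22} gives \eqref{2.15} at once. Moreover the explicit $h^{(2)}$ together with $Ric[\hat h](p)=0$ forces $h^{(2)}(p)=0$, which with the radial-curvature relation gives \eqref{2.17}; and a short computation with the first-variation formula for scalar curvature, using the contracted Bianchi identity, shows $R[h_\rho]_{,\rho\rho}|_{\rho=0}(p)=0$, so \eqref{h4-trace} yields $\trace_{\hat h}h^{(4)}(p)=0$. Feeding $h^{(2)}(p)=0$ and $\trace_{\hat h}h^{(4)}(p)=0$ into the $\rho$-expansion of $Ric_{\rho\rho}[\bar g]$ gives \eqref{2.18}, and into that of $R[\bar g]$ --- for which it is cleanest to use the conformal change law together with $Ric[g^+]=-ng^+$, giving in normal form $R[\bar g]=-\tfrac n\rho\,\partial_\rho\log\det h_\rho$ --- gives \eqref{formula84}. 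The step I expect to be the genuine obstacle is precisely this interplay between the rigidity of the normal form and the restricted conformal freedom: one must check that conformal normal coordinates on $M$, filtered through Lemma \ref{lemma-normal-form}, really do annihilate at $p$ all the tensors that must vanish (notably $Ric[\bar g](\partial_\rho)(p)$, and in the Einstein case $h^{(2)}(p)$ and $\trace_{\hat h}h^{(4)}(p)$), and then to run the derivative-of-curvature identities while tracking traces and contractions so that the Weyl terms appear with the stated constants --- of these, the cancellation behind $R[h_\rho]_{,\rho\rho}|_{\rho=0}(p)=0$ is the least mechanical.
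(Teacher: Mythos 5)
Your proposal is correct and follows essentially the same route as the paper: Lee--Parker conformal normal coordinates on $M$ at $p$ combined with Lemma \ref{lemma-normal-form}, the expansions of Lemma \ref{inverseg}, the relation $\bar R_{\rho i\rho j}|_{\rho=0}=-h^{(2)}_{ij}$, and the contracted-Bianchi cancellation behind $R[h_\rho]_{,\rho\rho}|_{\rho=0}(p)=0$, which you rightly single out as the key step. The only (harmless) deviations are bookkeeping ones, e.g.\ deriving \eqref{2.22} from the $\rho$-expansion of $Ric_{\rho\rho}[\bar g]$ rather than from the determinant comparison, and obtaining \eqref{formula84} from $R[\bar g]=-\tfrac n\rho\partial_\rho\log\det h_\rho$ instead of the second Bianchi identity.
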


\begin{proof}
We fix $p\in M$. The proof uses \cite[Theorem 5.1]{leeparker} on the existence of conformal normal coordinates $\{x^1,\ldots,x^n\}$ on $M$ centered at $p$. In particular, we can choose a representative $\hat{h}$ of the conformal infinity  such
 that, at $p$:
\begin{itemize}
\item[(a)] $Ric_{ij}[\hat{h}]=0$,
\item[(b)] $Ric_{ij,k}[\hat{h}]+Ric_{jk,i}[\hat{h}]+Ric_{ki,j}[\hat{h}]=0,$
\item[(c)] $\Sym(Ric_{ij,kl}[\hat{h}]+\frac{2}{9}R_{pijm}[\hat{h}]R_{pklm}[\hat{h}])=0,$
\item[(e)] $R_{,ii}[\hat h]=-\frac{1}{6}|W|^2[\hat h]$, and moreover, near $p$, $R[\hat h]=O(|x|^2)$.
\end{itemize}
We immediately get that properties $(ii)$ and $(vi)$ are true.

We know that there exists a geodesic defining function $\rho$ such that we can write $g^+$ in normal form \eqref{normal-form}-\eqref{compacted}. From Lemma \ref{lemma-normal-form} and the umbilicity property we must have
$$h^{(1)}=0\quad\text{and}\quad H=0\quad \text{on }M,$$
which in particular implies that
\begin{equation*}\label{gradients}\nabla_{\alpha}H=0\quad \text{and}  \quad\nabla_{\alpha}h^{(1)}=0\quad\text{on }M.\end{equation*}

In the following, we will use overline for curvatures referring to $\bar g$, while without overline will mean quantities with respect to the metric $\hat h$.

Next, we look at the metric $\bar g$ near $p$. Statement \emph{(iii)} follows from \eqref{geometry1} and \eqref{h2}, using \emph{(ii)}, while \emph{(iv)} is an immediate consequence of \eqref{geometry2}.

Now we look at the expansion for the determinant $\det(\bar g)$ in the umbilic case given in \eqref{determinant1}. The term with $x_{k}x_lx_m$ vanishes because of the choice of $\hat h$ satisfying condition (b). In addition, recalling \eqref{geometry1} and \eqref{h2}, we have that in
in the umbilic case
$$\overline{Ric}_{\rho\rho,k}=-\tfrac{1}{2}\left(\trace_{\hat h}h^{(2)}\right)_{,k}=\tfrac{1}{4(1-n)}R_{,k}.$$
Using (e) above we see that $\overline{Ric}_{\rho\rho,k}$ vanishes at the point. Thus from \eqref{determinant1} and the previous remarks we obtain \emph{(v)}.

Finally, we show \emph{(vii)}. For an expansion
$$h_{\rho}=\hat{h}+h^{(2)}\rho^2+h^{(3)}\rho^3+h^{(4)}\rho^4+O(\rho^5),$$
we have that
\begin{equation}\label{formula10}
\det h_{\rho}=\det\hat{h}\left[1+\trace_{\hat{h}}h^{(2)}\rho^2+\trace_{\hat{h}}h^{(3)}\rho^3
+\left\{\trace_{\hat{h}}h^{(4)}
+\tfrac{1}{2}(\trace_{\hat{h}}h^{(2)})^2-\tfrac{1}{2}\|h^{(2)}\|_{\hat{h}}^{2}\right\}\rho^4
+O(\rho^5)\right].
\end{equation}
We first recall formula \eqref{h2} for $\trace_{\hat{h}}h^{(2)}$. Comparing the coefficients of $\rho^2$ in \eqref{formula10} with \eqref{detg} we must have that, at the point $p$,
$$
\overline{Ric}_{\rho\rho,kk}=\frac{R_{,ii}}{2(n-1)},
$$
and \eqref{2.16} follows from property (e) above. Next, comparing the coefficients of $\rho^3$ we obtain that at the point $p$,
$$-\tfrac{1}{3}\overline{Ric}_{\rho\rho,\rho}=\trace_{\hat{h}}h^{(3)},$$
which shows \eqref{2.22}.\\

From now on we assume that $h^{(3)}$ and $h^{(5)}$ vanish. In this case, we have the asymptotics
$$h_{\rho}=\hat{h}+h^{(2)}\rho^2+h^{(4)}\rho^4+O(\rho^6),$$
In particular, \eqref{formula10} reduces to
\ba\label{det}
\det(h_{\rho})=\det\hat{h}\left[1+\trace_{\hat{h}}{h^{(2)}}\rho^2
+\left\{ \trace_{\hat{h}}h^{(4)}+\tfrac{1}{2}(\trace_{\hat{h}}h^{(2)})^2
-\tfrac{1}{2}\|h^{(2)}\|_{\hat{h}}^{2}\right\}\rho^4+O(\rho^6)\right].
\ea
Comparing the coefficients of in the asymptotics of \eqref{detg} and \eqref{det} we conclude that, recalling (a), (e),  \eqref{h2}, \eqref{h22} and \eqref{h4-trace}, at a point $p\in M$,
\begin{align}\label{2.27}
&\overline{Ric}_{\rho\rho,\rho}=0,\\
\label{2.28}
&-\frac{1}{2}\overline{R}_{\rho\rho,kk}=\frac{R_{,ii}}{4(1-n)},\\
&-\frac{1}{12}\overline{Ric}_{\rho\rho,\rho\rho}-\frac{1}{6}(\overline{R}_{\rho i\rho j})^2=\frac{\hat{h}^{ij}Ric_{ij}[h_{\rho}]_{,\rho\rho}|_{\rho=0}}{8(2-n)}
=\frac{R''[h_\rho]|_{\rho=0}}{8(2-n)}.\label{formula50}
\end{align}
Equation \eqref{2.27} is precisely \eqref{2.15}. On the other hand, note that
\begin{equation*}\begin{split}
Ric_{ij}[h_{\rho}]&=-{h}^{kl}_\rho\left[\frac{1}{2}\left(\frac{\partial^2h_{kl}(\rho)}{\partial x^i\partial x^j}+\frac{\partial^2h_{ij}(\rho)}{\partial x^k\partial x^l}-\frac{\partial^2h_{il}(\rho)}{\partial x^k\partial x^j}-\frac{\partial^2h_{kj}(\rho)}{\partial x^i\partial x^l}\right)\right.\\
&+\left.\Gamma_{kl}^{p}(h_\rho)\Gamma_{ij}^{r}(h_\rho)h_{pr}(\rho)-\Gamma_{il}^{p}(h_\rho)
\Gamma_{kj}^{r}(h_\rho)h_{pr}(\rho)\right].
\end{split}\end{equation*}
By \eqref{h22}, taking derivatives twice in $\rho$, the expression above simplifies to
\begin{equation}\label{formula52}\begin{split}
\hat{h}^{ij}Ric_{ij}''[h(\rho)](p)&
=-\hat{h}^{ij}(p)\hat{h}^{kl}(p)\left(\frac{\partial^2h^{(2)}_{kl}}{\partial x^i\partial x^j}+\frac{\partial^2h^{(2)}_{ij}}{\partial x^k\partial x^l}-\frac{\partial^2h^{(2)}_{il}}{\partial x^k\partial x^j}-\frac{\partial^2h^{(2)}_{kj}}{\partial x^i\partial x^l}\right)(p)\\
&=-2\left(\frac{\partial^2h_{kk}^{(2)}}{\partial (x^i)^2}-\frac{\partial^2h^{(2)}_{ki}}{\partial x^k\partial x^i}\right)(p).
\end{split}\end{equation}
Using the formula for $h^{(2)}$ from \eqref{h22}
\begin{equation}\label{formula51}\begin{split}
\frac{\partial^2h_{kk}^{(2)}}{\partial{(x^{i})}^2}&-\frac{\partial^2h_{ki}^{(2)}}{\partial x^k\partial x^i}\\
=&\frac{1}{2(n-2)(n-1)}\left([nR_{,ii}+2(1-n)Ric_{kk,ii}-2(1-n)Ric_{ki,ki}-R_{,ii}\right].
\end{split}\end{equation}
But, contracting the Bianchi identity
$$R_{lkjm,ii}+R_{lkij,mi}+R_{lkmi,ji}=0,$$
on the    indices $l,j$ and again $k,m$, we get
$$Ric_{ki,ki}=\tfrac{1}{2}R_{,ii},$$
so we get that expression \eqref{formula51} vanishes at $p$.
Thus, from \eqref{formula50} and \eqref{formula52} we can conclude that
\ba\label{2.32}
-\frac{1}{12}\overline{Ric}_{\rho\rho,\rho\rho}-\frac{1}{6}(\bar{R}_{\rho i\rho j})^2=0.
\ea
On the other hand, for every point on $M$,
\begin{equation}\label{2.33}\begin{split}
\bar{R}_{\rho i\rho j}&=-\frac{1}{2}\left(\frac{\partial^2\bar{g}_{\rho\rho}}{\partial x^i\partial x^j}
+\frac{\partial^{2}\bar{g}_{ij}}{\partial \rho^2}
-\frac{\partial^2\bar{g}_{i\rho}}{\partial \rho\partial x^j}
-\frac{\partial^2\bar{g}_{\rho j}}{\partial \rho\partial x^j}\right)\\
&\quad-\Gamma_{\rho\rho}^{q}\Gamma_{ij}^{s}\,\bar{g}_{qs}+\Gamma_{i\rho}^{q}\Gamma_{\rho j}^{s}\,\bar{g}_{qs}\\
&=-h^{(2)}_{ij},
\end{split}\end{equation}
which vanishes at the point $p$. Thus \eqref{2.17} holds. Moreover, putting together \eqref{2.32} with \eqref{2.33} we arrive to conclusion \eqref{2.18}.

Moreover, differentiating \eqref{2.33} on the tangential variables, recalling \eqref{h22},
$$
\bar{R}_{\rho i\rho j,ij}=-\frac{R_{,ij}\hat{h}_{ij}+2(1-n)Ric_{ij,ij}}{2(n-2)(n-1)},
$$
which, after evaluating at $p$ yields
\begin{equation}\label{formula80}
\bar{R}_{\rho i\rho j,ij}=\tfrac{1}{2(n-1)}R_{,ii}.
\end{equation}
This shows \eqref{2.19}. In addition, we recall the second Bianchi identity
\begin{equation*}\label{second-Bianchi}\bar{R}_{abcd,\rho}+\bar{R}_{ab\rho c,d}+\bar{R}_{abd\rho,c}=0.\end{equation*}
In particular, contracting
$$\bar{R}_{abcd,\rho\rho}+\bar{R}_{ab\rho c,d\rho}+\bar{R}_{abd\rho,c\rho}=0,$$
gives
\begin{equation}\label{formula81}\bar{R}_{,\rho\rho}
=2\overline{Ric}_{i\rho,i\rho}+2\overline{Ric}_{\rho\rho,\rho\rho}\end{equation}
and contracting
$$\bar{R}_{abcd,\rho i}+\bar{R}_{ab\rho c,di}+\bar{R}_{abd\rho,ci}=0$$
yields
\begin{equation}\label{formula82}
\overline{Ric}_{\rho i,\rho i}=\overline{Ric}_{\rho\rho,kk}-\bar R_{\rho i\rho j,ij}.
\end{equation}
Thus from \eqref{formula80} and \eqref{2.28} we conclude that
\begin{equation}\label{formula83}\overline{Ric}_{\rho i,\rho i}=0\end{equation}
and the point $p$.
Next, from \eqref{formula81}, interchanging the order of covariant differentiation and recalling \eqref{2.18},
\begin{equation*}
\bar{R}_{,\rho\rho}=2\overline{Ric}_{\rho i,\rho i}+2(\bar{R}_{a\rho i\rho}\overline{Ric}_{ai}+\bar{R}_{aii\rho}\overline{Ric}_{\rho a})=2\overline{Ric}_{\rho i,\rho i},
\end{equation*}
where we have used \emph{(iii)} and \eqref{2.17} to cancel out terms. As a consequence, from \eqref{formula83}, we conclude
$$\bar{R}_{,\rho\rho}=0,$$
which is \eqref{formula84}.
\end{proof}

\section{Some technical lemmas in $\mathbb R^{n+1}_+$}

We only consider the case $\gamma\in (0,1)\setminus \{1/2\}$, since $\gamma=1/2$ is much simpler. For the rest of the section, we also assume that $n>4+2\gamma$.

At first, we review the following fact about Bessel functions (see section 9.6.1 in \cite{abramowitz})
\begin{Lemma}\label{bessel0}
The solution of ODE
\ba\label{ode}
\partial_{ss}\phi+\frac{a}{s}\partial_{s}\phi-\phi=0
\ea
maybe written as $\phi(s)=s^{\gamma}\psi(s)$, for $a=1-2\gamma$, where $\psi$ solves that is well known Bessel
 equation
 \ba\label{bessel}
 s^{2}\psi^{''}+s\psi^{'}-(s^2+\gamma^2)\psi=0.
 \ea
 In addition, \eqref{bessel} has two independent solutions, $I_{\gamma}$, $K_{\gamma}$, which are the modified Bessel functions. Their asymptotic behavior is given precisely by
 \begin{eqnarray*}
 I_{\gamma}(s)\sim \frac{1}{\Gamma(\gamma+1)}\left(\frac{s}{2}\right)^{\gamma}
 \left(1+\frac{s^2}{4(\gamma+1)}+\frac{s^4}{32(\gamma+1)(\gamma+2)}+...\right),
 \end{eqnarray*}
 \begin{eqnarray*}
 K_{\gamma}(s)&\sim& \frac{\Gamma(\gamma)}{2}\left(\frac{2}{s}\right)^{\gamma}\left(1+\frac{s^2}{4(\gamma+1)}+\frac{s^4}{32(\gamma+1)(\gamma+2)}+...\right)\\
 && +\frac{\Gamma(-\gamma)}{2}\left(\frac{s}{2}\right)^{\gamma}\left(1+\frac{s^2}{4(\gamma+1)}+\frac{s^{4}}{32(\gamma+1)(\gamma+2)}+...\right),
 \end{eqnarray*}
 for $s\rightarrow 0^{+}$, $\gamma\notin \Z$. And when $s\rightarrow +\infty$,
 \begin{eqnarray*}
 I_{\gamma}(s)\sim \frac{1}{\sqrt{2\pi s}}e^{s}\left(1-\frac{4\gamma^2-1}{8s}+\frac{(4\gamma^2-1)(4\gamma^2-9)}{2!(8s)^2}-...\right),\\
 K_{\gamma}(s)\sim \sqrt{\frac{\pi}{2s}}e^{-s}\left(1+\frac{4\gamma^2-1}{8s}+\frac{(4\gamma^2-1)(4\gamma^2-9)}{2!(8s)^2}+...\right).
 \end{eqnarray*}
\end{Lemma}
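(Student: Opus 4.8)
The statement splits into two essentially independent parts: the reduction of \eqref{ode} to the Bessel equation \eqref{bessel}, which is a one-line substitution, and the description of $I_\gamma$, $K_\gamma$ together with their asymptotics, which is classical and should just be quoted. The plan is therefore to carry out the substitution carefully and then appeal to the standard theory of modified Bessel functions (e.g.\ section~9.6 of \cite{abramowitz}).

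For the first part I would set $\phi(s)=s^{\gamma}\psi(s)$ and compute
$$\phi'=\gamma s^{\gamma-1}\psi+s^{\gamma}\psi',\qquad
\phi''=\gamma(\gamma-1)s^{\gamma-2}\psi+2\gamma s^{\gamma-1}\psi'+s^{\gamma}\psi''.$$
Substituting into $\phi''+\tfrac{a}{s}\phi'-\phi=0$ and multiplying through by $s^{2-\gamma}$ gathers all the terms into
$$s^{2}\psi''+(2\gamma+a)s\psi'+\big(\gamma(\gamma-1)+a\gamma-s^{2}\big)\psi=0.$$
Now I would insert the value $a=1-2\gamma$: the coefficient of $s\psi'$ becomes $2\gamma+a=1$, and the coefficient of $\psi$ (apart from the $-s^{2}$) becomes $\gamma(\gamma-1)+(1-2\gamma)\gamma=-\gamma^{2}$. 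Hence the equation collapses to $s^{2}\psi''+s\psi'-(s^{2}+\gamma^{2})\psi=0$, which is exactly \eqref{bessel}.

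For the second part I would recall that \eqref{bessel} is the modified Bessel equation of order $\gamma$; its solution space is two-dimensional, and since we are in the regime $\gamma\notin\Z$ a fundamental system is given by the modified Bessel functions $I_{\gamma}$ and $K_{\gamma}$ (with $K_{\gamma}$ the usual linear combination of $I_{\gamma}$ and $I_{-\gamma}$). The convergent series expansions near $s=0^{+}$ and the exponential asymptotics as $s\to+\infty$ displayed in the statement are precisely the standard ones, and I would cite \cite[\S 9.6]{abramowitz} for them rather than re-derive them. I do not expect any genuine obstacle: the only point demanding a little care is the bookkeeping of the powers of $s$ in the substitution and checking that the two cancellations $2\gamma+a=1$ and $\gamma(\gamma-1)+a\gamma=-\gamma^{2}$ both rely on the exact relation $a=1-2\gamma$; everything beyond that is a direct invocation of the classical Bessel function theory.
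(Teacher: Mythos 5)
Your proposal is correct: the substitution computation checks out (both cancellations $2\gamma+a=1$ and $\gamma(\gamma-1)+a\gamma=-\gamma^{2}$ are right), and the paper itself offers no proof of this lemma, simply citing \cite[\S 9.6.1]{abramowitz} for the Bessel function facts exactly as you do. Your write-up is in fact slightly more complete than the paper's treatment, since you carry out the change of variables explicitly.
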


\medskip
We have the following identities:

\begin{Lemma}\label{bessel1} Let  $\phi(s)=s^{\gamma}K_{\gamma}(s)$ be the solution to \eqref{ode} (up to multiplicative constant).  Then:
\begin{align}
&\int_{0}^{+\infty}s^{a+3}\left(\phi^2+{\phi'}^2\right)ds=\frac{3(a+2)}{2}\int s^{a+1}\phi^2ds,\label{10}\\
&\int_{0}^{+\infty}s^{a+2}\phi'^2ds=\frac{3+a}{3-a}\int_{0}^{\infty}s^{a+2}\phi^2ds,\label{11}
\\
&\int_{0}^{+\infty}s^{a+4}{\phi'}^2ds=\frac{(a+5)(a+3)}{5}\int_{0}^{+\infty}s^{a+2}\phi^2ds,\label{7}\\
&\int_{0}^{+\infty}s^{a+4}\phi^2ds=\frac{(a+3)(5-a)}{5}\int_{0}^{+\infty}s^{a+2}\phi^2ds,\label{8}\\
&\int_{0}^{+\infty}s^{n-4+a}\phi^2ds=\frac{(n-4)(n-5+a)(n-3-a)}{4(n-3)}
\int_{0}^{+\infty}s^{n-6+a}\phi^2ds.\label{9}
\end{align}
\end{Lemma}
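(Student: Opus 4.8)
The plan is to derive all five identities from a single master relation obtained by integration by parts, using only the ODE \eqref{ode} satisfied by $\phi(s)=s^\gamma K_\gamma(s)$, namely $\phi''+\tfrac{a}{s}\phi'-\phi=0$, which we rewrite as $(s^a\phi')'=s^a\phi$. The key point is that all integrals $\int_0^\infty s^{b}\phi^2\,ds$ and $\int_0^\infty s^{b}\phi'^2\,ds$ converge for the relevant exponents because $\phi(s)\sim c\,s^{2\gamma}$ as $s\to0$ (from the $K_\gamma$ asymptotics, the leading $s^{-\gamma}$ term combines with the $s^\gamma$ prefactor, but one must check which branch survives; in fact $s^\gamma K_\gamma(s)\to \tfrac{\Gamma(\gamma)}{2}2^\gamma$ is bounded, and $\phi'(s)\sim c\,s^{2\gamma-1}$) and $\phi,\phi'$ decay exponentially as $s\to\infty$; hence all boundary terms at $0$ and $\infty$ in the integrations by parts vanish provided the powers of $s$ are large enough, which is exactly what the standing hypothesis $n>4+2\gamma$ (and the fixed small exponents $a+2$, $a+3$, etc.) guarantees.

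First I would establish the two basic reduction formulas. Multiplying the ODE in the form $(s^a\phi')'=s^a\phi$ by $s^{c}\phi'$ and integrating, or equivalently multiplying by $s^c\phi$, one gets for suitable $c$:
\begin{equation*}
\int_0^\infty s^{a+c}\phi'^2\,ds \;=\; \frac{(a+c)(?)}{(?)}\int_0^\infty s^{a+c-2}\phi^2\,ds,
\end{equation*}
obtained by writing $\int s^{a+c}\phi'^2 = \int s^{c}\phi'\,(s^a\phi') $ — no, more cleanly: integrate $\int_0^\infty s^{c}\phi\,(s^a\phi')'\,ds = \int_0^\infty s^{a+c}\phi^2\,ds$ by parts twice, once moving the derivative off $(s^a\phi')$ to produce $-\int (s^c\phi)' s^a\phi' = -\int(c\,s^{c-1}\phi + s^c\phi')s^a\phi'$, and then handling $\int s^{a+c-1}\phi\phi' = \tfrac12\int s^{a+c-1}(\phi^2)' = -\tfrac{a+c-1}{2}\int s^{a+c-2}\phi^2$. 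Combining these yields a linear relation among $\int s^{a+c}\phi'^2$, $\int s^{a+c-2}\phi^2$, and $\int s^{a+c}\phi^2$; a second, independent relation comes from multiplying $(s^a\phi')'=s^a\phi$ by $s^c\phi'$ directly and integrating by parts, giving $\int s^{a+c}\phi\phi' = \int s^c\phi'(s^a\phi')' = [\,s^c\phi'\cdot s^a\phi'\,]-\int s^a\phi'(s^c\phi')'$, i.e. a relation between $\int s^{a+c}\phi'^2$ and $\int s^{a+c-1}\phi^2$ after using $\int s^{a+c}\phi\phi'=-\tfrac{a+c}{2}\int s^{a+c-1}\phi^2$ (wait — exponent bookkeeping must be redone carefully, but this is the routine part). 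Solving the $2\times2$ system gives \eqref{11} with $c=2$, then \eqref{7} and \eqref{8} with $c=4$ (note \eqref{8} is a $\phi^2$-to-$\phi^2$ relation, coming purely from the twice-integrated-by-parts identity applied with the right exponent), and \eqref{9} with $c=n-4$; finally \eqref{10} is just the sum of \eqref{7}-type and \eqref{8}-type pieces at $c=3$, or directly $\int s^{a+3}(\phi^2+\phi'^2)$ reduced via the same machinery.

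The only genuine subtlety — and the step I expect to be the main obstacle — is verifying that every boundary term vanishes, both at $s=0$ and at $s=\infty$, for each specific exponent appearing in \eqref{10}--\eqref{9}. At infinity this is immediate from the exponential decay $K_\gamma(s)\sim\sqrt{\pi/2s}\,e^{-s}$. At the origin one must track the behavior $\phi(s)= \tfrac{\Gamma(\gamma)}{2}2^\gamma + O(s^2) + O(s^{2\gamma})$ and $\phi'(s)=O(s)+O(s^{2\gamma-1})$ from Lemma \ref{bessel0}; the dangerous boundary terms are of the form $s^{a+c}\phi'^2$, $s^{a+c-1}\phi^2$, $s^{a+c}\phi\phi'$ evaluated at $0$. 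Since $a=1-2\gamma$, the most delicate is $s^{a+c-1}\phi'^2\sim s^{-2\gamma+c}\cdot s^{2(2\gamma-1)}=s^{c+2\gamma-2}$, which $\to0$ iff $c>2-2\gamma$; for $c=2$ this needs $\gamma>0$ (true), and for the larger values of $c$ it is automatic, while the constraint $n>4+2\gamma$ (strengthening the earlier $n\ge5$) is exactly what makes $c=n-4$ work in \eqref{9}. I would organize the writeup by first stating the convergence/boundary-term lemma once and for all, then carrying out the two-integration-by-parts computation in general with a parameter $c$, and finally specializing $c\in\{2,3,4,n-4\}$ to read off \eqref{11}, \eqref{10}, \eqref{7}, \eqref{8}, \eqref{9}; the algebra in each specialization is elementary and I would present only the resulting coefficients.
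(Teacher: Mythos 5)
Your proposal is correct and follows essentially the same route as the paper: the paper likewise multiplies the ODE \eqref{ode} by $s^{a+5}\phi'$, $s^{a+4}\phi$, $s^{n-3+a}\phi'$ and $s^{n-4+a}\phi$, integrates by parts, and solves the resulting linear relations, which is exactly your two-relation scheme specialized to $c\in\{2,3,4,n-4\}$ (and indeed \eqref{10} drops out of the single twice-integrated identity at $c=3$). Your explicit verification that the boundary terms vanish, using the $K_\gamma$ asymptotics and the hypothesis $n>4+2\gamma$, is a detail the paper leaves implicit.
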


 \begin{proof}
 We only prove \eqref{7}, \eqref{8} and \eqref{9} here. Multiply \eqref{ode} by $s^{a+5}\phi'$ and integrate by parts, we get
\begin{equation*}\begin{split}
0&=-\int_{0}^{+\infty}s^{a+5}\phi\cdot\phi'ds+a\int_{0}^{+\infty}s^{a+4}{\phi'}^2ds
+\int_{0}^{+\infty}s^{a+5}\phi'\phi''ds\\
&=\frac{a+5}{2}\int_{0}^{+\infty}s^{a+4}\phi^2ds+a\int_{0}^{+\infty}s^{a+4}{\phi'}^2ds
-\frac{a+5}{2}\int_{0}^{+\infty}s^{a+4}{\phi'}^2ds.
\end{split}\end{equation*}
Then
\ba\label{1st}
\int_{0}^{+\infty}s^{a+4}{\phi'}^2ds=\frac{5+a}{5-a}\int_{0}^{+\infty}s^{a+4}\phi^2ds.
\ea
Next, multiply \eqref{ode} by $s^{a+4}\phi(s)$ and integrate by parts. Using  \eqref{1st}  we can get
\begin{equation}\label{2nd}\begin{split}
0&=-\int_{0}^{+\infty}s^{a+4}\phi^2ds
+a\int_{0}^{+\infty}s^{a+3}\phi\cdot\phi'ds+\int_{0}^{+\infty}s^{a+4}\phi\cdot\phi''ds\\
&=-4\int_{0}^{+\infty}s^{a+3}\phi\cdot\phi'ds
-\int_{0}^{+\infty}s^{a+4}\phi^{2}ds-\int_{0}^{+\infty}s^{a+4}{\phi'}^2ds\\
&=2(a+3)\int_{0}^{+\infty}s^{a+2}\phi^2ds-\frac{10}{5-a}\int_{0}^{+\infty}s^{a+4}\phi^2ds.
\end{split}\end{equation}
Then \eqref{1st} and \eqref{2nd} tell us that
$$\int_{0}^{+\infty}s^{a+4}\phi^2ds=\frac{(a+3)(5-a)}{5}\int_{0}^{+\infty}s^{a+2}\phi^2ds,$$
and
$$\int_{0}^{+\infty}s^{a+4}{\phi'}^2ds=\frac{(a+5)(a+3)}{5}\int_{0}^{+\infty}s^{a+2}\phi^2ds,$$
so \eqref{7} and \eqref{8} are proved.
Next, multiplying \eqref{ode} by $s^{n-4+a}\phi$ and integrating, we can get
\ba\label{5}
-\int_{0}^{+\infty}s^{n-4+a}{\phi'}^2ds
+\frac{(n-4)(n-5+a)}{2}\int_{0}^{+\infty}s^{n-6+a}\phi^2ds=\int_{0}^{+\infty}s^{n-4+a}\phi^2ds.
\ea
On the other hand, multiply \eqref{ode} by $s^{n-3+a}\phi'$ and integrate it; we obtain
\ba\label{6}
\int_{0}^{+\infty}s^{n-4+a}{{\phi}'}^2ds=\frac{n-3+a}{n-3-a}\int_{0}^{+\infty}s^{n-4+a}\phi^2ds.
\ea
Finally, \eqref{5} and \eqref{6} show that \eqref{9} is true.
\end{proof}

\bigskip
The following lemma is very classical:

\begin{Lemma}[\cite{caffarellisilvestre}] \label{lemma-CS}
Given $w\in H^\gamma(\mathbb R^{n})$, there exists a unique solution $U\in W^{1,2}(\mathbb R^{n+1}_+,y^a)$ for the problem
\begin{equation}\label{equation}
\left\{
\begin{array}{l}
{\rm{div}}(y^{a}\nabla U)=0,~~\mbox{in}~\R_{+}^{n+1},\\
U(x,0)=w,~~~\mbox{on}~\R^{n}\times\{0\}.
\end{array}
\right.
\end{equation}
In Fourier variables it is written as
\begin{equation}\label{hatU}\hat{U}(\zeta,y)=\hat{w}(\zeta)\phi(|\zeta|y),\end{equation}
where
\ba\label{phi}\phi(s)=c_1s^{\gamma}K_{\gamma}(s)\ea
for a constant $c_1=\frac{2^{1-\gamma}}{\Gamma({\gamma})}$.
In particular,
\begin{equation}
\label{Poisson}U(x,y)=\mathcal K_\gamma \ast_x w=C_{n,\gamma}\int_{\R^n}\frac{y^{1-a}}{(|x-\tilde x|^2+|y|^2)^{\frac{n+1-a}{2}}}\,w(\tilde x)\,d\tilde x,\end{equation}
where $\mathcal K_\gamma$ is the Poisson kernel for the problem \eqref{equation}.
In addition,
\begin{equation*}
(-\Delta_{\mathbb R^n})^\gamma w=-d_\gamma^*\lim_{y\to 0}y^a\partial_y U,
\end{equation*}
where the constant $d_\gamma^*$ is given by \eqref{constant-d}.
\end{Lemma}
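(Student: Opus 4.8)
\medskip
\noindent\textbf{Sketch of the approach.} This is a classical result (see \cite{caffarellisilvestre}), so I would recall its Fourier-analytic proof. The plan is to take the partial Fourier transform in the tangential variables $x\mapsto\zeta$. Equation \eqref{equation} then decouples into the family of ordinary differential equations
\[
\partial_{yy}\widehat U+\frac{a}{y}\,\partial_y\widehat U-|\zeta|^2\widehat U=0,
\]
one for each frequency $\zeta$. After the rescaling $s=|\zeta|y$ this is precisely \eqref{ode}, so by Lemma \ref{bessel0} its solutions are spanned by $s^\gamma I_\gamma(s)$ and $s^\gamma K_\gamma(s)$. Requiring $U\in W^{1,2}(\R^{n+1}_+,y^a)$ forces decay as $y\to+\infty$, which rules out $I_\gamma$ by its exponential growth (Lemma \ref{bessel0}) and singles out $\phi(s)=c_1 s^\gamma K_\gamma(s)$; the constant $c_1=2^{1-\gamma}/\Gamma(\gamma)$ is chosen exactly so that $\phi(0)=1$, using $s^\gamma K_\gamma(s)\to 2^{\gamma-1}\Gamma(\gamma)$ as $s\to0^+$ from Lemma \ref{bessel0}. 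This produces \eqref{hatU}, and the normalization $\phi(0)=1$ encodes the trace condition $U(\cdot,0)=w$.

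\medskip
For membership in the energy space and for uniqueness, I would apply Plancherel in $\zeta$ together with the change of variables $s=|\zeta|y$:
\[
\int_{\R^{n+1}_+}y^a|\nabla U|^2\,dx\,dy
=\int_{\R^n}\!\!\int_0^{+\infty}\!y^a\big(|\partial_y\widehat U|^2+|\zeta|^2|\widehat U|^2\big)\,dy\,d\zeta
=\kappa_\gamma\int_{\R^n}|\zeta|^{2\gamma}|\widehat w(\zeta)|^2\,d\zeta,
\]
where $\kappa_\gamma=\int_0^{+\infty}s^a\big(\phi'(s)^2+\phi(s)^2\big)\,ds$ is finite: near $s=0$ the integrand is $O(s^{a})+O(s^{2\gamma-1})$, integrable because $a=1-2\gamma>-1$ and $\gamma>0$, while near $s=+\infty$ it decays exponentially by Lemma \ref{bessel0}. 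A similar computation bounds $\int y^a U^2$, so $U\in W^{1,2}(\R^{n+1}_+,y^a)$; uniqueness is then immediate from the variational characterization, since $U$ is the unique minimizer of the strictly convex energy $V\mapsto\int y^a|\nabla V|^2$ among competitors with trace $w$, whose Euler--Lagrange equation is \eqref{equation}. The Poisson representation \eqref{Poisson} follows by inverting the radial Fourier multiplier $\zeta\mapsto\phi(|\zeta|y)$; equivalently, one checks directly that $\mathcal K_\gamma(x,y)=C_{n,\gamma}\,y^{1-a}\big(|x|^2+y^2\big)^{-(n+1-a)/2}$ solves $\divergence(y^a\nabla\cdot)=0$ in $\R^{n+1}_+$ (a routine computation, using $1-a=2\gamma$ and $n+1-a=n+2\gamma$), is homogeneous of degree $-n$ in $(x,y)$, and is an approximate identity in $x$ as $y\to0^+$, with $C_{n,\gamma}$ fixed by $\int_{\R^n}\mathcal K_\gamma(x,y)\,dx=1$; by uniqueness it coincides with \eqref{hatU}.

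\medskip
Finally, for the Dirichlet-to-Neumann identity I would use the small-$s$ expansion of $\phi$ coming from Lemma \ref{bessel0}, namely $\phi(s)=1+\frac{\Gamma(-\gamma)}{2^{2\gamma}\Gamma(\gamma)}s^{2\gamma}+O(s^{2})$, so that $s^a\phi'(s)$ has a finite nonzero limit as $s\to0^+$. Carrying this through the identity $y^a\partial_y\widehat U=|\zeta|^{2\gamma}\widehat w(\zeta)\,s^a\phi'(s)$ and matching the resulting constant with \eqref{constant-d} recovers the stated formula $(-\Delta_{\R^n})^\gamma w=-d_\gamma^*\lim_{y\to0^+}y^a\partial_y U$ at the level of Fourier symbols. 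The only steps requiring care beyond bookkeeping are: (a) justifying this limit in the appropriate weak/distributional sense rather than merely formally; and (b) passing from Schwartz $w$ to general $w\in H^\gamma(\R^n)$ by density, which is controlled by the energy estimate above. Both are entirely standard and may be quoted from \cite{caffarellisilvestre}; I expect the precise tracking of the Bessel-function constants in step (a) to be the only genuine technical obstacle.
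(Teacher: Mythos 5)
Your proposal is correct and follows essentially the same route as the paper: partial Fourier transform in $x$, reduction to the ODE \eqref{ode}, and selection of the exponentially decaying Bessel solution $s^{\gamma}K_{\gamma}(s)$ normalized by $\phi(0)=1$ via the small-$s$ asymptotics of Lemma \ref{bessel0}. In fact the paper's proof stops after deriving \eqref{phi} and quotes the remaining assertions from \cite{caffarellisilvestre}, so your additional paragraphs on the energy identity, uniqueness, the Poisson kernel, and the Dirichlet-to-Neumann constant simply flesh out what the paper leaves to the citation.
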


\begin{proof}
We recall some details of the proof for convenience of the reader. Taking Fourier transform  in \eqref{equation} with respect to the variable $x$ we obtain
\begin{equation*}
\left\{
\begin{split}
&-|\zeta|^2\hat{U}(\zeta,y)+\frac{a}{y}\,\hat{U}_{y}(\zeta,y)+\hat{U}_{yy}(\zeta,y)=0,\\
&\hat{U}(\zeta,0)=\hat{w}(\zeta).
\end{split}
\right.
\end{equation*}
Thus we can write
$$\hat{U}(\zeta,y)=\hat{w}(\zeta)\phi(|\zeta|y),$$
where $\phi(t)$ solves the ODE
\begin{equation*}
\left\{
\begin{split}
&\partial_{ss}\phi+\frac{a}{s}\partial_{s}\phi-\phi=0,\quad s\in\mathbb R_+,\\
&\phi(0)=1,~\lim_{s\rightarrow +\infty}\phi(s)=0,
\end{split}
\right.
\end{equation*}
Then Lemma \ref{ode} gives the desired identity \eqref{phi}.
\end{proof}

\bigskip

We denote $|\nabla U|^2=(\partial_{x_1}U)^2+...+(\partial_{x_n}U)^2+(\partial_{y}U)^2$, and $|\nabla_{x}U|^2=(\partial_{x_1}U)^2+...+(\partial_{x_n}U)^2$.

\begin{Lemma}\label{technical}
Let
\begin{equation}\label{diffeo-1}
w(x)=\left(\frac{1}{|x|^2+1}\right)^{\frac{n-2\gamma}{2}},\quad x\in\mathbb R^n,
\end{equation}
 and set
 $U=\mathcal K_{\gamma}\ast_{x}w$ as given in \eqref{Poisson}. Define
\begin{equation*}\begin{split}
&I_1=\int_{\R^{n+1}_{+}}y^{a+2}x_1^2(\partial_{1}U)^2dxdy,~\\
&I_2=\int_{\R^{n+1}_{+}}y^{a+2}x_1^2(\partial_{2}U)^2dxdy,\\
&I_3=\int_{\R^{n+1}_{+}}y^{a+2}x_1x_2\partial_{1}U\partial_{2}Udxdy,\\
&I_4=\int_{\R^{n+1}_{+}}y^{a+4}|\nabla_{x}U|^2dxdy,\\
&I_5=\int_{\R_{+}^{n+1}}y^{a+2}x_1^2(\partial_{y}U)^2dxdy,\\
&I_{6}=\int_{\R_{+}^{n+1}}y^{a+4}|\partial_{y}U|^2dxdy,\\
&I_7=\int_{\R_{+}^{n+1}}y^ax_{1}^2U^2dxdy.
\end{split}\end{equation*}
Then
\begin{equation}\label{I123}
3I_2=3I_3=I_1,
\end{equation}
and
\begin{equation*}\begin{split}
I_3&=\frac{5n^3-10n^2-(a^2-2a+25)n-2a^2+4a+30}{20n(n+2)(n-3)}\int_{\R_{+}^{n+1}}y^{2+a}U^2dxdy,\\
I_{4}&=\frac{(a+3)(5-a)}{5}\int_{\R_{+}^{n+1}}y^{2+a}U^2dxdy,\\
I_5&=\frac{3+a}{20n(3-a)(n-3)}\left(5n^3-30n^2-(a^2+2a-55)n-2a^2+16a-30\right)\\
&\quad\cdot\int_{\R^{n+1}_{+}}y^{a+2}U^2dxdy,\\
I_6&=\frac{(a+5)(a+3)}{5}\int_{\R^{n+1}_{+}}y^{a+2}U^2dxdy,\\
I_{7}&=\frac{3n^2-18n-(a^2-2a-27)}{2(n-3)(3-a)(a+1)}\int_{\R_{+}^{n+1}}y^{a+2}U^2dxdy.
\end{split}\end{equation*}

\end{Lemma}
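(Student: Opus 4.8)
The plan is to reduce everything to one-dimensional integrals in the variable $y$ by using the explicit Fourier representation $\hat U(\zeta,y)=\hat w(\zeta)\phi(|\zeta|y)$ from Lemma \ref{lemma-CS}, together with Plancherel's theorem, and then to evaluate the resulting radial integrals using the Bessel identities of Lemma \ref{bessel1}. First I would record the Fourier-side expressions: $\partial_j U$ has transform $i\zeta_j\hat w\phi(|\zeta|y)$, $\partial_y U$ has transform $|\zeta|\hat w\phi'(|\zeta|y)$, and multiplication by $x_j$ corresponds to $i\partial_{\zeta_j}$ on the transform side. Since $w$ is radial, $\hat w$ is radial, so after applying Plancherel and integrating over angular variables in $\zeta$, each $I_k$ becomes a product of (a) a one-dimensional integral in $y$ of the form $\int_0^\infty y^{b}\phi(s y)^2$ or $\phi'(sy)^2$ type, which after the substitution $t=sy$ turns into $s^{-b-1}\int_0^\infty t^b\phi^2\,dt$, and (b) a purely Euclidean integral in $\zeta$ (equivalently in $x$, or of $\hat w$ against powers of $|\zeta|$). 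The Bessel identities \eqref{10}--\eqref{9} then express every $y$-integral that appears in terms of the single normalizing integral $\int_0^\infty s^{a+2}\phi^2\,ds$, and the same integral appears in the denominator $\int_{\mathbb R^{n+1}_+}y^{a+2}U^2$, so all Bessel-type constants cancel and only rational functions of $n$ and $a$ survive.

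Next I would prove the symmetry relations \eqref{I123}. The identity $3I_2=I_1$ should follow by symmetry in the $x$-variables: the integrand of $I_1$ is $y^{a+2}x_1^2(\partial_1 U)^2$ and that of $I_2$ is $y^{a+2}x_1^2(\partial_2 U)^2$; summing the $I_2$-type integral over all $n-1$ transverse directions and adding $I_1$ gives $\int y^{a+2}x_1^2|\nabla_x U|^2$, but by the rotational symmetry of $U$ the quantity $\int y^{a+2}x_1^2(\partial_j U)^2$ for $j\neq 1$ does not actually depend on the choice of transverse index, and a separate identity relating $\int y^{a+2}x_1^2(\partial_1U)^2$ to the transverse ones comes from an integration-by-parts/scaling argument analogous to the Bessel computations, or directly from the angular integration in Fourier variables where the relevant angular moments of $\zeta_j^2$ and of $\zeta_1^2\zeta_j^2/|\zeta|^2$ are computed. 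The equality $I_3=I_2$ is similar: $\int y^{a+2}x_1x_2\partial_1U\partial_2U$ and $\int y^{a+2}x_1^2(\partial_2U)^2$ have the same angular moment because $U$ depends on $x$ only through $|x|$, so $\partial_jU=\frac{x_j}{|x|}U'$ and both integrands reduce to $y^{a+2}\frac{x_1^2x_2^2}{|x|^2}(U')^2$ up to relabeling.

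Then I would compute the seven scalar constants. For $I_4$ and $I_6$ this is immediate: $I_4=\int y^{a+4}|\nabla_x U|^2$ reduces, via Plancherel, to $\int_0^\infty s^{a+4}\phi^2$ divided by $\int_0^\infty s^{a+2}\phi^2$ against the factor $\int|\zeta|^2|\hat w|^2$, which by \eqref{8} gives the stated $\frac{(a+3)(5-a)}{5}$ times $\int y^{a+2}U^2$; likewise $I_6$ uses \eqref{7} to give $\frac{(a+5)(a+3)}{5}$. For $I_1, I_2, I_3, I_5, I_7$ the $x_1^2$ factor forces a second derivative $\partial_{\zeta_1}^2$ acting on $\hat w(|\zeta|)\phi(|\zeta|y)$, which produces several terms: $\hat w''$-type pieces, cross terms $\hat w'\phi'$, and $\phi''$-type pieces; one then uses the ODE \eqref{ode} to trade $\phi''$ for $\phi-\frac{a}{s}\phi'$ and the radial identities \eqref{10}--\eqref{11}, plus the known radial integrals $\int|\zeta|^{2k}|\hat w|^2$ (equivalently the explicit Euclidean integrals of the standard bubble $w$), to collapse everything. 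The various rational prefactors such as $\frac{n-5+a}{4(n-3)}$ coming from \eqref{9} and the angular averages of $\zeta_1^4/|\zeta|^2$ over $\mathbb S^{n-1}$ combine to give the polynomial numerators in $n$ and $a$ displayed in the statement.

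The main obstacle will be the bookkeeping in the $I_1, I_5, I_7$ computations: keeping track of all the pieces generated by two $\zeta_1$-derivatives hitting the product $\hat w(|\zeta|)\phi(|\zeta|y)$, correctly separating the $\phi$-part (handled by Lemma \ref{bessel1}) from the $\hat w$-part (handled by classical Euclidean integrals of the bubble and the relation between $\|w\|$, $\||\zeta|\hat w\|$, etc.), and then verifying that the leftover constants simplify to exactly the claimed rational functions — in particular checking the delicate cancellations that make the final answers proportional to the single quantity $\int_{\mathbb R^{n+1}_+}y^{a+2}U^2\,dx\,dy$ with no residual Bessel integrals. Everything else is routine once the Fourier/Plancherel reduction is set up and the Bessel identities of Lemma \ref{bessel1} are in hand.
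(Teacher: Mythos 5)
Your proposal follows essentially the same route as the paper: Plancherel with $\hat U(\zeta,y)=\hat w(\zeta)\phi(|\zeta|y)$, the substitution $s=|\zeta|y$ to factor out one-dimensional Bessel integrals handled by Lemma \ref{bessel1}, angular-moment identities for $\zeta_1^4/|\zeta|^2$ versus $\zeta_1^2\zeta_2^2/|\zeta|^2$ to get $3I_2=3I_3=I_1$, and the explicit Bessel form of $\hat w$ to evaluate the $\partial_\xi\hat w$ pieces. Your physical-space derivation of $I_2=I_3$ from radiality ($\partial_jU=\tfrac{x_j}{|x|}U'$) is a slightly more direct variant of the paper's Fourier-side change-of-variables argument, but the overall strategy and the key lemmas invoked are the same.
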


\begin{proof}
We write here $\zeta=(\xi,\bar{\eta})$ the Fourier variable for $x$, where $\bar{\eta}=(\eta_1,...\eta_{n-1})=(\eta_1,\tilde{\eta})\in \R^{n-1}$ for $\tilde\eta\in\mathbb R^{n-2}$.
We only calculate $I_3$ and $I_4$ here, the rest are very similar.
First, note that
\begin{equation*}\begin{split}
I_{1}=&\int_{0}^{\infty}y^{a+2}\int_{\R^{n-1}}\int_{\R}\left|\partial_{\xi}[\xi\hat{U}(\xi,\bar{\eta},y)]\right|^2d\xi d\bar{\eta}dy,\\
I_{2}=&\int_{0}^{\infty}y^{a+2}\int_{\R^{n-1}}
\int_{\R}|\partial_{\xi}[\eta_{1}\hat{U}(\xi,\bar{\eta},y)]|^2d\xi d\bar{\eta}dy,\\
I_3=&\int_{0}^{\infty}y^{a+2}\int_{\R^n}\xi\eta_1(\partial_{\xi}\hat{U})(\partial_{\eta_1}\hat{U})\,
d\zeta dy,\\
I_4=&\int_{0}^{+\infty}y^{4+a}\int_{\R^n}|\zeta|^2|\hat{U}(\zeta,y)|^2d\zeta dy,\\
I_5=&\int_{0}^{+\infty}y^{2+a}\int_{\R^n}|\partial_{y}\partial_{\xi}\hat{U}(\zeta,y)|^2d\zeta dy,\\
I_6=&\int_{0}^{+\infty}y^{a+4}\int_{\R^n}(\partial_{y}\hat{U}(\zeta,y))^2d\zeta dy,\\
I_7=&\int_{0}^{+\infty}y^{a}\int_{\R^n}|\partial_{\xi}\hat{U}(\zeta,y)|^2d\zeta dy.
\end{split}\end{equation*}
From Lemma \ref{lemma-CS} we can write
\begin{equation}\label{formula40}
\hat{U}(\zeta,y)=\phi(|\zeta|y)\hat{w}(\zeta),
\end{equation}
and one may prove that $\hat U$ is radial in the variable $\zeta$.
Then we compute
\begin{equation*}
\begin{split}
I_1=&\int_0^{+\infty} y^{a+2}\int_{\mathbb R^n} \left[\xi^2|\partial_\xi \hat U|^2
+2\xi\hat U\partial_\xi\hat U+\hat U^2 \right]d\zeta dy\\
=&\int_{0}^{+\infty}y^{a+2}\int_{\R^n}\xi^2|\partial_{\xi}\hat{U}|^2d\zeta dy\\
=&\int_{0}^{+\infty}y^{a+2}\int_{\R^n}\frac{\xi^4}{|\zeta|^2}\hat{U}'^2(|\zeta|,y)d\zeta dy,
\end{split}\end{equation*}
and
\begin{equation*}\begin{split}
I_2=&\int_{0}^{+\infty}y^{a+2}\int_{\R^n}\eta_{1}^{2}|\partial_{\xi}\hat{U}|^2d\zeta dy\\
=&\int_{0}^{+\infty}y^{a+2}\int_{\R^n}\frac{\xi^2\eta_1^2}{|\zeta|^2}\,\hat{U}'^2(|\zeta|,y)d\zeta dy.
\end{split}
\end{equation*}
Next, for any $f(|\zeta|)$ radial function, we define
\begin{equation*}\begin{split}
&X_\alpha:=\int_{\mathbb R^n}\frac{f^2(|\zeta|) \xi^4}{|\zeta|^{\alpha}}\,d\zeta,\\
&Y_\alpha:=\int_{\mathbb R^n}\frac{f^2(|\zeta|) \xi^2\eta_{1}^{2}}{|\zeta|^{\alpha}}\,d\zeta.
\end{split}\end{equation*}
We claim that
\begin{equation}\label{X3Y}X_\alpha=3Y_\alpha.
\end{equation}
Indeed, this is a simple symmetry argument: note that
\begin{equation*}
\int_{\R^n}\frac{f^2(|\zeta|)(\xi+\eta_1)^4}{|\zeta|^{\alpha}}d\zeta=2X_\alpha+6Y_\alpha,
\end{equation*}
and, on the other hand,
\begin{equation*}
\int_{\R^n}\frac{f^2(|\zeta|)(\xi+\eta_1)^4}{|\zeta|^{\alpha}}d\zeta=4X_\alpha,
\end{equation*}
where the last integral is computed using the change of variables
\begin{equation*}
\bar{\xi}=\frac{1}{\sqrt{2}}(\xi+\eta_1), \bar{\eta_1}=\frac{1}{\sqrt{2}}(\xi-\eta_1).
\end{equation*}
Then, from \eqref{X3Y} we immediately obtain that $I_1=3I_2$. The relation with $I_3$ may be computed in a similar way. This shows \eqref{I123}. Finally, the integral $X_\alpha$ may be calculated thanks to
\begin{equation}\label{formulaX}\begin{split}
\int_{\R^n}\frac{\hat{f}^2|\zeta|^4}{|\zeta|^{\alpha}}d\zeta
&=\int_{\R^n}\frac{\hat{f}^2|\xi^2+\eta_{1}^{2}+|\tilde{\eta}|^2|^2}{|\zeta|^{\alpha}}d\zeta\\
&=nX_\alpha+n(n-1)Y_\alpha=\frac{n(n+2)}{3}X_\alpha.
\end{split}\end{equation}

On the other hand, recalling expression \eqref{formula40},
\begin{equation*}\begin{split}
I_1=&\int_{0}^{+\infty}y^{a+2}\int_{\R^n}\xi^2|\partial_{\xi}\hat U|^2d\zeta dy\\
=&\int_{0}^{+\infty}y^{a+2}\int_{\R^n}\xi^2|\partial_{\xi}\hat{w}|^2\cdot\phi^2(|\zeta|y)\, d\zeta dy\\
&+2\int_{0}^{+\infty}y^{a+3}\int_{\R^n}
\frac{\xi^3}{|\zeta|}\hat{w}\partial_{\xi}\hat{w}
\phi(|\zeta|y)\phi^{'}(|\zeta|y)
d\zeta dy\\
&+\int_{0}^{+\infty}y^{a+4}\int_{\R^n}
\frac{\xi^4}{|\zeta|^2}\hat{w}^2\phi'^2(|\zeta|y)\,d\zeta dy\\
=:&\,H_1+H_2+H_3.
\end{split}\end{equation*}
Direct calculation shows that, after the change of variables $s=|\zeta|y$,
\begin{equation*}\begin{split}
H_1=&\int_{0}^{+\infty}s^{a+2}\phi^{2}(s)\,ds\int_{\R^n}\frac{\xi^2}
{|\zeta|^{a+3}}\,|\partial_{\xi}\hat{w}|^2d\zeta,\\
H_2=&\int_{0}^{+\infty}s^{a+3}\phi(s)\cdot\phi^{'}(s)ds\int_{\R^n}\frac{
(a+2)\xi^4-3\xi^2|\bar{\eta}|^2}
{|\zeta|^{a+7}}\,\hat{w}^2\,d\zeta,\\
H_3=&\int_{0}^{+\infty}s^{a+4}{\phi'}^{2}(s)\,ds\int_{\R^n}\frac{\xi^4}
{|\zeta|^{a+7}}\,\hat{w}^2d\zeta.
\end{split}\end{equation*}
Thus, using \eqref{X3Y} and \eqref{formulaX} for $X_\alpha$,
\begin{equation*}\begin{split}
H_2&=(a-n+3)\int_{0}^{+\infty}s^{a+3}\phi(s)\phi'(s)\,ds
\int_{\R^n}\frac{\hat{w}^2\xi^4}{|\zeta|^{a+7}}\,d\zeta\\
&=\frac{3(a-n+3)}{n(n+2)}\int_{0}^{+\infty}s^{a+3}\phi(s)\phi'(s)\,ds
\int_{\R^n}\frac{\hat{w}^2}{|\zeta|^{a+3}}\,d\zeta\\
&=\frac{3(n-a-3)(a+3)}{2n(n+2)}\int_{0}^{+\infty}s^{a+2}\phi^2(s)\,ds
\int_{\R^n}\frac{\hat{w}^2}{|\zeta|^{a+3}}\,d\zeta\\
&=\frac{3(n-a-3)(a+3)}{2n(n+2)}\int_{0}^{+\infty}y^{a+2}
\int_{\R^n}\hat{U}^2\,d\zeta dy,\\
\end{split}\end{equation*}
and again, thanks to \eqref{formulaX}, and the relation between $\phi$ and $\phi'$ given in \eqref{7},
\begin{equation*}\begin{split}
H_3&=\frac{1}{3}\int_{0}^{+\infty}s^{a+4}{\phi'}^2ds
\int_{\R^n}\frac{\hat{w}^2\xi^4}{|\zeta|^{a+7}}\,d\zeta\\
&=\frac{1}{3n(n+2)}\int_0^{+\infty}s^{a+4}\phi'(s)^2\int_{\mathbb R^n} \frac{\hat w^2}{|\zeta|^{a+3}}\,d\zeta\\
&=\frac{(a+5)(a+3)}{5n(n+2)}\int_{0}^{+\infty}y^{a+2}\int_{\R^n}U^2dxdy.
\end{split}\end{equation*}

Next, we give the estimate for $H_1$. Note that for $w$ as given \eqref{diffeo-1} we have that
\ba\label{omega}
\hat{w}(\zeta)=C_0|\zeta|^{-\gamma}K_{\gamma}(|\zeta|),
\ea
where $K_{\gamma}(s)$ is the modified Bessel function from Lemma \ref{bessel0}. This is a well known formula for which we have not found a proof, so we provide one in the Appendix. We have that
\begin{equation*}
\partial_{\xi}\hat{w}=C_{0}\left[-\gamma|\zeta|^{-\gamma-1}K_{\gamma}(|\zeta|)
+|\zeta|^{-\gamma}K'_{\gamma}(|\zeta|)\right]\cdot\frac{\xi}{|\zeta|}.
\end{equation*}
We may calculate directly from  \eqref{formulaX},
\begin{equation}\label{equationH1}\begin{split}
H_1&=\frac{3C_0^2}{n(n+2)}\int_{0}^{+\infty}s^{a+2}\phi^{2}(s)
\int_{\R^n}\frac{\left[K'_{\gamma}(|\zeta|)-\frac{\gamma K_{\gamma}(|\zeta|)}{|\zeta|}\right]^2}{|\zeta|^2}\,d\zeta\\
&=\frac{3C_0^2|\mathbb S_{n-1}|}{n(n+2)}\int_{0}^{+\infty}s^{a+2}\phi^{2}(s)
\int_{0}^{+\infty}t^{n-3}(K'_{\gamma}(t)-\gamma K_{\gamma}(t)/t)^2\,dt.
\end{split}\end{equation}
Note that $K_\gamma(t)$ is a solution of \eqref{bessel}. Thus, multiplying this equation by $t^{n-4}K_{\gamma}'(t)$ and integrating we arrive at
\begin{equation*}\begin{split}
0&=\int_{0}^{+\infty}t^{n-2}K_{\gamma}'K_{\gamma}''dt+\int t^{n-3}{K_{\gamma}'}^2\,dt
-\int_{0}^{\infty}t^{n-2}K_{\gamma} K_{\gamma}'dt-\gamma^2\int_{0}^{+\infty}t^{n-4}K_{\gamma}K_{\gamma}'\,dt\\
&=-\tfrac{n-2}{2}\int_{0}^{+\infty}t^{n-3}{K_{\gamma}'}^2dt+\int_{0}^{+\infty}t^{n-3}{K_{\gamma}'}^2\,dt
+\tfrac{n-2}{2}\int_{0}^{+\infty}t^{n-3}K_{\gamma}^2\,dt
\\&+\tfrac{\gamma^2(n-4)}{2}\int_{0}^{+\infty}t^{n-5}K_{\gamma}^2\,dt,
\end{split}\end{equation*}
from which we get
\begin{equation*}
\int_{0}^{+\infty}t^{n-3}{K_{\gamma}'}^2\,dt=\frac{1}{n-4}
\left\{(n-2)\int_{0}^{+\infty}t^{n-3}K_{\gamma}^2\,dt
+\gamma^2(n-4)\int_{0}^{+\infty}t^{n-5}K_{\gamma}^2\,dt \right\}.
\end{equation*}
Expanding out \eqref{equationH1}, taking into account the above expression we arrive at
\begin{equation*}\begin{split}
H_1
=&\frac{3C_0^2|S_{n-1}|}{n(n+2)}\int_{0}^{+\infty}s^{a+2}\phi^2(s)\,ds\\
&\cdot\left(\frac{n-2}{n-4}\int_{0}^{+\infty}t^{n-3}K_\gamma^2\,dt+(2\gamma^2+\gamma(n-4))
\int_{0}^{+\infty}t^{n-5}K_\gamma^2\,dt\right)\\
=&\frac{3|\mathbb S^{n-1}|C_0^2}{n(n+2)}
\left(\frac{(n-2)(n-5+a)(n-3-a)}{4(n-3)}+\gamma(n-4)+2\gamma^2\right)\\
\quad &\cdot\int_{0}^{+\infty}s^{a+2}\phi^2(s)\,ds\int_{0}^{+\infty}t^{n-5}K_\gamma^2\,dt,
\end{split}\end{equation*}
where we have used \eqref{9} to combine both terms above, since $\phi(t):=t^{\gamma}K_{\gamma}(t)$ satisfies \eqref{ode}. Finally, since
\begin{equation*}
\begin{split}
|\mathbb S^{n-1}|C_0^2
\int_{0}^{+\infty}s^{a+2}\phi^2(s)\,ds\int_{0}^{+\infty}t^{n-5}K_\gamma^2(t)dt
&=\int_0^{+\infty}s^{a+2}\phi^2(s)\,ds \int_{\mathbb R^n}  \frac{\hat w^2(|\zeta|)}{|\zeta|^{a+3}}\,d\zeta\\
&=\int_{\mathbb R^{n+1}_+} y^{a+2} U^2dxdy,
\end{split}\end{equation*}
we obtain a formula for $H_1$
$$H_1=\frac{2}{n(n+2)}
\left(\frac{(n-2)(n-5+a)(n-3-a)}{4(n-3)}+\gamma(n-4)+2\gamma^2\right)
\int_{\R^{n+1}_{+}}y^{a+2}U^2dxdy.$$
From here we can calculate $I_1,I_2,I_3$ and, in particular,
\begin{equation*}\begin{split}
I_3=&\left(\frac{5n^2+5(a+1)n+4a^2-18a-10}{20n(n+2)}-\frac{a(n-2)(n-5+a)}{4n(n-3)(n+2)}\right)
\int_{\R_{+}^{n+1}}y^{2+a}U^2dxdy\\
=&\frac{5n^3-10n^2-(a^2-2a+25)n-2a^2+4a+30}{20n(n+2)(n-3)}\int_{\R_{+}^{n+1}}y^{2+a}U^2 dxdy.
\end{split}\end{equation*}
Similarly,
\begin{equation*}
I_4=\int_{0}^{+\infty}{s^{4+a}\phi^{2}(s)}ds\int_{\R^n}\frac{|\hat{w}|^2(\zeta)}{|\zeta|^{3+a}}d\zeta
=\frac{(a+3)(5-a)}{5}\int_{\R_{+}^{n+1}}y^{a+2}U^{2}dxdy.
\end{equation*}
\end{proof}

\section{Proof of Theorem \ref{theorem2}}

By the work of \cite{gonzalezqing}, it is enough to find a suitable test function such that inequality  \eqref{condition-Lambda} is strictly satisfied.

On $\R^n$, we fix the conformal diffeomorphisms of the sphere
\begin{equation*}
w_{\mu}(x):=\left(\frac{\mu}{|x|^2+\mu^2}\right)^{\frac{n-2\gamma}{2}},
\end{equation*}
which satisfy
\begin{equation}\label{diffeo}
(-\Delta)^\gamma w_\mu=c\, w_\mu^{\frac{n+2\gamma}{n-2\gamma}},
\end{equation}
for some positive constant $c$. We also consider the corresponding extension $U_{\mu}:=U(w_{\mu})$ from Lemma \ref{lemma-CS}, that can be written as
\begin{equation*}
U_{\mu}(x,y)=\mathcal K_{\gamma}\ast_{x}w_{\mu}.
\end{equation*}
 It is clear that
\begin{equation*}
w_{\mu}(x)=\frac{1}{\mu^{\frac{n-2\gamma}{2}}}w_{1}\left(\frac{x}{\mu}\right),
~~\mbox{and}~U_{\mu}(x,y)=\frac{1}{\mu^{\frac{n-2\gamma}{2}}}
U_1\left(\frac{x}{\mu},\frac{x}{\mu}\right).
\end{equation*}
These functions attain the best constant in the trace Sobolev inequality \eqref{Euclidean-Sobolev}. More precisely, looking at \eqref{Sobolev-sphere},
\begin{equation}\label{embedding}
\|w_\mu\|_{L^{2^*}(\mathbb R^n)}^2= \bar S(n,\gamma)\int_{\mathbb R^{n+1}_+}
y^a |\nabla {U_\mu}|^2 \,dxdy.\end{equation}
From \eqref{diffeo} we know that $U_\mu$ is the
(unique) solution of the problem
\begin{equation}\label{extension-Rn}\left\{\begin{split}
\divergence(y^a \nabla U_\mu)=0 &\quad\mbox{in } \mathbb R^{n+1}_+,\\
-\lim_{y\to 0}y^a\partial_y
U_\mu=c_{n,\gamma}(w_\mu)^{\frac{n+2\gamma}{n-2\gamma}}&\quad\mbox{on
} \mathbb R^n,
\end{split}\right.\end{equation}
On the other hand, if we multiply equation \eqref{extension-Rn} by
$U_\mu$ and integrate by parts,
\begin{equation}\label{formula200}\int_{\mathbb R^{n+1}_+} y^a|\nabla U_\mu|^2\,dxdy=
c_{n,\gamma} \int_{\mathbb R^n} (w_\mu)^{2^*}\,dx.\end{equation}
Now we compare
\eqref{formula200} with \eqref{embedding}. Using
\eqref{Sobolev-sphere} we arrive at
\begin{equation}\label{formula202}\Lambda_\gamma(\mathbb S^n,[g_c])=c_{n,\gamma}d_\gamma^*\left[\int_{\mathbb R^n}
(w_\mu)^{2^*}dx\right]^{\frac{2\gamma}{n}}.\end{equation}

Note that $w_{\mu}$ is also radially symmetric and nonincreasing, so also $U_{\mu}=\mathcal K_{\gamma}\ast_{x}w_{\mu}$ is radially symmetric and non-increasing since the kernel $\mathcal K_{\gamma}$ is as such.

Given any $\epsilon>0$, let $B_\epsilon$ be the ball of radius
$\epsilon$ centered at the origin in $\mathbb R^{n+1}$ and
$B_\epsilon^+$ be the half ball of radius $\epsilon$ in $\mathbb
R^{n+1}_+$. Choose a smooth radial cutoff function $\eta$, $0\leq
\eta\leq 1$, supported on $B_{2\epsilon}$, and satisfying $\eta=1$
on $B_\epsilon$. For $\mu<<\epsilon$, we choose as test function simply
$$V_\mu:=\eta U_\mu,$$
for the functional \eqref{barI}, which we recall is given by
\begin{equation}\label{energy2}
\overline I_{\gamma}[V,\hat h]=\frac{d_{\gamma}^{*}\int_{X^{n+1}}\left({\rho}^a|\nabla V|_{\bar{g}}^2+E(\rho)V^2\right)\,dv_{\bar{g}}}
{(\int_{M^n}V^{2^{*}}\,dv_{\hat{h}})^{2/{2^{*}}}}
\end{equation}

\noindent{\bf Step 1: Computation of the Energy in $B_{\epsilon}^{+}$.}

\noindent Here $V_\mu=U_\mu$. By Proposition \ref{metric1}, using the expansion for  $\sqrt{\det \bar g}$,
\begin{equation}\label{equation100}\begin{split}
\int_{B_{\epsilon}^{+}}y^{a}|\nabla U_{\mu}|_{\bar{g}}^2dv_{\bar{g}}&=\int_{B_{\epsilon}^{+}}y^a[{\bar g}^{ij}\partial_{i}U_{\mu}(\partial_{j}U_{\mu})+(\partial_{y}U_{\mu})^2] dv_{\bar{g}}\\
&=\int_{B_{\epsilon}^{+}}y^a |\nabla U_{\mu}|_{\bar{g}}^2dxdy+\int_{B_{\epsilon}^{+}}y^a|\nabla U_{\mu}|_{\bar{g}}^2\cdot O(|(x,y)|^5)dxdy\\
&-\frac{1}{6}\overline{Ric}_{yy,y}\int_{B_{\epsilon}^{+}}y^{a+3}|\nabla U_{\mu}|_{\bar{g}}^2dxdy
+\frac{1}{2}\int_{B_{\epsilon}^{+}}y^{a}|\nabla U_{\mu}|_{\bar{g}}^2(\det \bar{g})^{(4)}dxdy\\
&=I+II+III+IV,
\end{split}\end{equation}
where $(\det \bar{g})^{(4)}$ means the fourth order $O(r^4)$, $r=|(x,y)|$, in the expansion of $\det \bar{g}$.

As to $II$,
\begin{equation}\label{II}\begin{split}
\int_{B_{\epsilon}^{+}}y^a|\nabla U_{\mu}|_{\bar{g}}^2\cdot O(|(x,y)|^5)dxdy&\le C\mu^5\int_{B_{\epsilon/\mu}^{+}}y^{a}|(x,y)|^5|\nabla U_1|^2dxdy\\
&=\mu^5[\tilde{\cal{E}}_5+o(1)],
\end{split}\end{equation}
where
$f=o(1)$ means that
$$
\lim_{\epsilon/\mu\rightarrow \infty}f=0,
$$
and
$$\tilde{\cal{E}}_{k}:=\int_{\R_{+}^{n+1}}y^{a}|(x,y)|^k|\nabla U_1|^2dxdy.$$
From \eqref{omega} and expansion formula for $K_{\gamma}$ in Lemma \ref{bessel0},  it is easy to check that both $${\cal{E}}_{k}:=\int_{\R_{+}^{n+1}}y^{a+k}|\nabla U_1|^2dxdy<+\infty$$
and
$$\int_{\R_{+}^{n+1}}y^a|x|^5|\nabla U_1|^2dxdy<+\infty$$
are finite when $n>5+2\gamma $, as a consequence, $\tilde {\mathcal E}_5<\infty$.\\

Next, we estimate the term $III$,
\begin{equation*}\begin{split}
III&=-\frac{1}{6}\overline{Ric}_{yy,y}\mu^3\int_{B_{\epsilon/\mu}^{+}}y^{a+3}|\nabla U_1|^2dxdy-\frac{1}{6}\overline{Ric}_{yy,y}\int_{B_{\epsilon}^{+}}y^{a+3}(\bar g^{ij}
-\delta_{i}^{j})(\partial_iU_{1})(\partial_{j}U_{1})dxdy\\
&=-\frac{1}{6}\bar{R}_{yy,y}\mu^3\int_{B_{\epsilon/\mu}^{+}}y^{a+3}|\nabla U_1|^2dxdy\\
&+\mu^5\left(\int_{\R_{+}^{n+1}}y^{a+3}O(|(x,y)|^2)|\nabla U_1|^2dxdy+o(1)\right).
\end{split}\end{equation*}
While using \eqref{10} from Lemma \ref{bessel1}, recalling \eqref{hatU} and the change $s=|\zeta|y$,
\begin{equation*}\begin{split}
\int_{\R_{+}^{n+1}}y^{a+3}|\nabla U_1|^2dxdy&=\int_{\R^n}\frac{|\hat{w}|^2(\zeta)}{|\zeta|^{a+2}}\int_{0}^{+\infty}s^{a+3}\left(\phi^2(s)
+{\phi'}^2(s)\right)ds\\
&=\frac{3(a+2)}{2}\int_{\R^n}\frac{|\hat{w}|^2(\zeta)}{|\zeta|^{a+2}}\int_{0}^{+\infty}s^{a+1}\phi^2(s)ds\\
&=\frac{3(a+2)}{2}\int_{\R_{+}^{n+1}}y^{a+1}U_1^2(x,y)dxdy.
\end{split}\end{equation*}
Thus, for $n>5+2\gamma$,
\ba\label{III}
III&=&-\frac{(a+2)}{4}\overline{Ric}_{yy,y}\mu^3\int_{\R_{+}^{n+1}}y^{a+1}U_1^2(x,y)dxdy+O(\mu^5).
\ea

Now, we give an estimate $IV$. But, noting the symmetry property for the curvature and integral,
\begin{equation*}\begin{split}
2\,IV&=\int_{B_{\epsilon}^{+}}y^a|\nabla U_{\mu}|_{\bar{g}}^2(\det \bar{g})^{(4)}dxdy\\
&=\int_{B_{\epsilon}^{+}}y^a(\det \bar g)^{(4)}|\nabla U_{\mu}|^2dxdy+\int_{B_{\epsilon}^{+}}y^a(\det g)^{(4)}(\bar{g}^{ij}-\delta_{ij})(\partial_{i}U_{\mu})(\partial_{j}U_{\mu})\,dxdy\\
&=\int_{B_{\epsilon}^{+}}y^a(\det \bar{g})^{(4)}|\nabla U_{\mu}|^2dxdy+O(\mu^5)\\
&=\left(-\frac{1}{20}{Ric}_{kl,mp}-\frac{1}{90}\bar{R}_{iksl}R_{imsp}\right)
\int_{B_{\epsilon}^{+}}y^ax_kx_lx_mx_p|\nabla U_{\mu}|^2dxdy\\\
&\quad-\frac{1}{2}\sum_{k}\overline{Ric}_{yy,kk}\int_{B_{\epsilon}^{+}}y^ax_{1}^{2}|\nabla U_{\mu}|^2dxdy\\
&\quad-\frac{1}{12}\left(\overline{Ric}_{yy,yy}+2(\bar{R}_{yiyj})^2\right)\int_{B_{\epsilon}^{+}}y^{a+4}|\nabla U_{\mu}|^2dxdy\\
&\quad+O(\mu^5)\\
&=IV.1+IV.2+IV.3+O(\mu^5)
\end{split}\end{equation*}
Proposition \ref{metric1}\emph{(vi)} immediately gives that $IV.1=0$. Next, we estimate IV.2. For that, we write
\begin{equation*}
IV.2=-\frac{1}{2}\mu^4\sum_{k=1}^{n}\overline{Ric}_{yy,kk}(IV.2.1+(n-1)IV.2.2+IV.2.3)+\mu^4o(1),
\end{equation*}
where
\begin{equation*}\begin{split}
&IV.2.1=\int_{0}^{+\infty}\int_{\R^n}y^{a+2}x_1^2(\partial_1U_1)^2dxdy,\\
&IV.2.2=\int_{0}^{+\infty}\int_{\R^n}y^{a+2}x_2^2(\partial_1U_1)^2dxdy,\\
&IV.2.3=\int_{0}^{+\infty}\int_{\R^n}y^{a+2}x_1^2(\partial_{y}U_1)^2dxdy.
\end{split}\end{equation*}
Then Lemma \ref{technical} quickly yields that
\begin{equation*}
IV.2=-\frac{1}{2}\overline{Ric}_{yy,kk}\mu^4\left((n+2)I_3+I_5\right).
\end{equation*}
Finally, using the notation from the same lemma, we can write the term $IV.3$ as
\begin{equation*}
IV.3=-\frac{1}{12}(\overline{Ric}_{yy,yy}+2(\bar{R}_{yiyj})^2)(I_4+I_6)\mu^4+\mu^4o(1).
\end{equation*}
Thus, putting all together we arrive at
\begin{equation}\label{IV}
IV=\left\{-\frac{1}{4}\overline{Ric}_{yy,kk}\left((n+2)I_3+I_5\right)
-\frac{1}{24}\left(\overline{Ric}_{yy,yy}+2(\bar{R}_{yiyj})^2\right)(I_4+I_6)\right\}\mu^4+\mu^4o(1),
\end{equation}
for $n>5+2\gamma$.

To conclude, we give the estimate for the term $I$ in \eqref{equation100}. Direct calculation shows that
$$x_1\partial_{2}U_{1}=x_2\partial_{1}U_{1}.$$
In fact, the Fourier transform of $x_1\partial_{2}U_{1}$ is
\begin{equation*}\begin{split}
\partial_{\xi}(\eta_1\hat{U}_1(\xi,\eta_{1},\tilde{\eta}))
&=\eta_1\partial_{\xi}(\phi(|\zeta|y)\hat{w}_1(\zeta))\\
&=\eta_1\left[(\partial_\xi \hat w_1)\phi(|\zeta|y)+y\frac{\xi}{|\zeta|}\hat w_1\phi'(|\zeta|y)\right],
\end{split}\end{equation*}
and we have that $x_1 \partial_2 w_1=x_2\partial_1 w_1$. Thus the previous expression is symmetric with respect to the first two variables.

Then an analogous symmetry argument yields that we may restrict to consider the fourth order terms in the expansion of $\bar{g}^{ij}$. Thus
\begin{equation}\label{formula110}\begin{split}
I&=\int_{B_{\epsilon}^{+}}y^a|\nabla U_{\mu}|^2dxdy+\int_{B_{\epsilon}^{+}}y^{a}(g^{ij}-\delta^{ij})(\partial_i U_{\mu})(\partial_{j}U_{\mu})dxdy\\
&=\int_{B_{\epsilon}^{+}}y^{a}|\nabla U_{\mu}|^2dxdy\\
&+\int_{B_{\epsilon}^{+}}\left(\frac{1}{24}{\bar g^{ij}}_{,yyyy}y^4+\frac{1}{4}\bar g^{ij}_{,yykl}y^2x_kx_l
+\frac{1}{24}\bar g_{,klmp}^{ij}x_kx_lx_mx_p\right)(\partial_iU_{\mu})(\partial_jU_{\mu})   dxdy\\
&=\int_{B_{\epsilon}^{+}}y^{a}|\nabla U_{\mu}|^2dxdy+A_1+A_2+A_3.
\end{split}\end{equation}
First we estimate $A_1$, given by
$$24A_1=\int_{B_{\epsilon}^{+}}\bar g_{,yyyy}^{ij}y^{4+a}(\partial_{i}U_{\mu})(\partial_{j}U_{\mu})dxdy.$$
Note that
$$\partial_{i}U_{\mu}=-C_{n,\gamma}(n+1-a)\int_{\R^n}\frac{y^{1-a}(x_i-\tilde x_i)}{(|x-\tilde x|^2+y^2)^{\frac{n+1-a}{2}+1}}
\cdot\left(\frac{\mu}{|\tilde x|^2+\mu^2}\right)^{\frac{n-2\gamma}{2}}d\tilde x.$$
Then, because of expression \eqref{expansioninverseg} for the inverse of the metric we have that
\begin{equation*}\begin{split}
24A_1&=\int_{B_{\epsilon}^{+}}(2\bar{R}_{yiyj,yy}
+16\bar{R}_{yiys}\bar{R}_{ysyj})y^{4+a}(\partial_iU_{\mu})(\partial_jU_{\mu})dxdy\\
&=\frac{1}{n}\int_{B_{\epsilon}^{+}}(2\overline{Ric}_{yy,yy}+16(\bar{R}_{yiyj})^2)y^{4+a}|\nabla_{x} U_{\mu}|^2dxdy\\
&=\frac{2\overline{Ric}_{yy,yy}+16(\bar{R}_{yiyj})^2}{n}\mu^{4}\int_{B_{\epsilon/\mu}^{+}}y^{4+a}|\nabla_x U_1|^2(x,y)dxdy.
\end{split}\end{equation*}
Also, using again expression \eqref{expansioninverseg}, we have that $$\bar{g}^{ij}_{,yykl}=2\bar{R}_{yiyj,kl};$$
(the other terms vanish thanks to \eqref{2.17}). Thus, by a symmetry argument,
\begin{equation*}\begin{split}
4A_2&=\int_{B_{\epsilon}^{+}}\bar g_{,yykl}^{ij}y^{2+a}x_kx_l(\partial_i U_{\mu})(\partial_j U_{\mu})dxdy\\
&=2\bar{R}_{yiyi,ii}\int_{B_{\epsilon}^{+}}y^{2+a}x_{i}^2(\partial_{i}U_{\mu})^2dxdy\\
&+\sum_{i\neq j}2\bar{R}_{yiyi,jj}\int_{B_{\epsilon}^{+}}y^{2+a}x_{i}^{2}(\partial_{j}U_{\mu})^2dxdy\\
&+\sum_{i\neq j}4\bar{R}_{yiyj,ij}\int_{B_{\epsilon}^{+}}y^{2+a}x_ix_j(\partial_{i}U_{\mu})(\partial_{j}U_{\mu})dxdy.
\end{split}\end{equation*}
After changing variables and reordering, taking into account \eqref{I123} to group some of the terms,
\begin{equation*}\begin{split}
4A_2&=2\mu^{4}\bar{R}_{yiyi,ii}\int_{B_{\epsilon/\mu}^{+}}y^{2+a}x_{i}^2(\partial_{i}U_1)^{2}dxdy\\
&+2\mu^{4}\sum_{i\neq j}\bar{R}_{yiyi,jj}\int_{B_{\epsilon/\mu}^{+}}y^{a+2}x_{i}^{2}(\partial_{j}U_1)^2dxdy\\
&+4\mu^{4}\sum_{i\neq j}\bar{R}_{yiyj,ij}\int_{B_{\epsilon/\mu}^{+}}y^{2+a}x_ix_j(\partial_{i}U_1)(\partial_{j}U_1)dxdy\\
&=2\mu^{4}\Big[\bar{R}_{yiyi,ii}I_1
+\sum_{i\neq j}\bar{R}_{yiyi,jj}I_2
+2\sum_{i\neq j}\bar{R}_{yiyj,ij}I_3+o(1)\Big]\\
&=2\mu^{4}\Big[\Big\{\bar{R}_{yiyi,ii}+2\bar{R}_{yiyi,ii}+\sum_{i\neq j}\bar{R}_{yiyi,jj}
+2\sum_{i\neq j}\bar{R}_{yiyj,ij}\Big\}I_3+o(1)\Big].
\end{split}\end{equation*}
 We conclude that
$$A_2=\frac{\mu^4}{2}\left(\sum_{k}\overline{Ric}_{yy,kk}+2\sum_{i,j}\bar{R}_{yiyj,ij}\right)\left[
\int_{B_{\epsilon/\mu}^{+}}y^{2+a}x_1x_2(\partial_{1}U_1)(\partial_{2}U_1)dxdy+o(1)\right].$$
As to the term $\int_{B_{\epsilon}^{+}}y^a|\nabla U_{\mu}|^2dxdy$ in expression \eqref{formula110}, we use the equation \eqref{extension-Rn} to get
\begin{equation*}\begin{split}
\int_{B_{\epsilon}^{+}}y^{a}|\nabla U_{\mu}|^2dxdy&=\int_{\Gamma_{\epsilon}^{+}}y^aU_{\mu}\partial_{\nu}U_{\mu}d\sigma
-\int_{\Gamma_{\epsilon}^{0}} \lim_{y\to 0} U_\mu y^a\partial_y U_\mu\\
&\le  c_{n,\gamma}\int_{\Gamma_{\epsilon}^{0}} w_{\mu}^{2^*}dx\le\frac{\Lambda_\gamma(\mathbb S^{n},[g_c])}{d_{\gamma}^{*}}
\left(\int_{\Gamma_{\epsilon}^{0}}w_{\mu}^{2^{*}}dx\right)^{\frac{n-2\gamma}{n}},
\end{split}\end{equation*}
where we have used that $\partial_{\nu}U_{\mu}\le 0$ on $\Gamma_{\epsilon}^{+}$ and \eqref{formula202}. In addition, the third term $A_3$ vanishes due to the symmetries of the curvature tensor.
Thus \eqref{formula110} reduces to
\begin{equation}\label{I}\begin{split}
I&\le  \frac{\Lambda_\gamma(\mathbb S^n,[g_c])}{d_{\gamma}^{*}}
\left(\int_{\Gamma_{\epsilon}^{0}}w_{\mu}^{2^{*}}dx\right)^{\frac{n-2\gamma}{n}}\\
&+\left\{\frac{1}{12n}(\overline{Ric}_{yy,yy}+8(\bar{R}_{yiyj})^2)I_{4}
+\frac{1}{2}(\overline{Ric}_{yy,kk}+2\sum\bar{R}_{yiyj,ij})I_3\right\}\mu^4+\mu^4o(1).
\end{split}\end{equation}

Finally, we can give an estimate for the energy \eqref{equation100}. Putting together  \eqref{I}, \eqref{II}, \eqref{III} and \eqref{IV} we conclude that
\begin{equation}\label{1}\begin{split}
\int_{B_{\epsilon}^{+}}y^a|\nabla V_{\mu}|^2dv_{\bar{g}}
&\le\frac{\Lambda_\gamma(\mathbb S^n,[g_c])}{d_{\gamma}^{*}}
\left(\int_{\Gamma_{\epsilon}^{0}}w_{\mu}^{2^*}\right)^{\frac{n-2\gamma}{n}}
-\frac{(a+2)}{4}\overline{Ric}_{yy,y}\mu^3\int_{B_{\epsilon/\mu}^{+}}y^{a+1}U_1^2dxdy\\
&+\left\{-\frac{1}{4}\overline{Ric}_{yy,kk}((n+2)I_3+I_5)
-\frac{1}{24}\overline{Ric}_{yy,yy}(I_4+I_6)\right.\\
&~~~~~+\left.\frac{1}{12n}\overline{Ric}_{yy,yy}I_4
+\frac{1}{2}(\overline{Ric}_{yy,kk}+2\bar{R}_{yiyj,ij})I_3\right\}\mu^4\\
&+o(\mu^4),
\end{split}\end{equation}
for $n>5+2\gamma$. Here we have used property \eqref{2.17} of the metric to cancel the terms $\bar R_{yiyj}$ in the integrals $I$ and $IV$.\\

On the other hand, now we calculate the term $\int_{B_{\epsilon}^{+}}E(y)U_{\mu}^2dv_{\bar{g}}$ in the energy \eqref{energy2}. For a metric $g^+=\rho^{-2}(d\rho^2+h_\rho)$ we may explicitly calculate its Laplace-Beltrami operator, and thus,
 \begin{equation*}\begin{split}
 E(\rho)=&\rho^{-1-s}(-\Delta_{g^{+}}-s(n-s))\rho^{n-s}\\
  &=-\frac{n-s}{2}\rho^{n-2s}\frac{\partial_{\rho}\det h_{\rho}}{\det h_{\rho}}=-\frac{n-1+a}{4}\rho^{a-1}\frac{\partial_{\rho}\det h_{\rho}}{\det h_{\rho}}.
 \end{split}\end{equation*}
 We need to calculate the expansion for $\frac{\partial_{y}\det h_{y}}{\sqrt{\det h_{y}}}$ near $p$. But
  $\det h_{y}=\det\bar{g}$, thus substituting the expansion \eqref{detg} we arrive at
  $$\frac{\partial_{y}\det h_{y}}{\sqrt{\det h_{y}}}=1-\overline{Ric}_{yy,kl}x_k x_l\rho-\overline{Ric}_{yy,y}\rho^2+\frac{1}{3}[-\overline{Ric}_{yy,yy}-2(\bar R_{y iy j})^2]+\ldots,$$
 where we have not written terms that will integrate to zero, in particular because of statement \emph{(vi)} in Proposition \ref{metric1}. Then, noting that $dv_{\bar g}=\sqrt{\det h_y}\,dxdy$, we have
  \begin{equation}\label{2}\begin{split}
 \int_{B_{\epsilon}^{+}}E(y)U_{\mu}^{2}dv_{\bar{g}}&= -\frac{n-1+a}{4}\int_{B_{\epsilon}^{+}}y^{a-1}\left(\frac{\partial_{y}\det h_{y}}{\sqrt{\det h_{y}}}\right)U_\mu^2\, dxdy\\
 &\le \frac{n-1+a}{4}\overline{Ric}_{yy,y}\int_{B_{\epsilon}^{+}}y^{a+1}U_{\mu}^{2}dxdy\\
 &\quad+\frac{n-1+a}{4}\overline{Ric}_{yy,kl}\int_{B_{\epsilon}^{+}}y^{a}x_k x_l U_{\mu}^2dxdy\\
&\quad+\frac{n-1+a}{12}\left(\overline{Ric}_{yy,yy}+2(\bar{R}_{yiyj})^2\right)
\int_{B_{\epsilon}^{+}}y^{a+2}U_{\mu}^2dxdy\\
&\quad+C\int_{B_{\epsilon}^{+}}y^a|(x,y)|^3U_{\mu}^2dxdy\\
&\le\frac{n-1+a}{4}\overline{Ric}_{yy,y}\mu^3\int_{B_{\epsilon/\mu}^{+}}y^{a+1}U_{1}^{2}dxdy\\
&\quad+\frac{n-1+a}{4}\overline{Ric}_{yy,kk}\mu^4\int_{B_{\epsilon/\mu}^{+}}y^ax_1^2U_{1}^{2}dxdy\\
&\quad+\frac{n-1+a}{12}\mu^4\left(\overline{Ric}_{yy,yy}+2(\bar{R}_{yiyj})^2\right)
\int_{B_{\epsilon/\mu}^{+}}y^{a+2}U_{1}^{2}dxdy\\
&\quad+C\mu^5(\tilde{\mathcal E}_3+o(1)).
 \end{split}\end{equation}

\noindent {\bf{Step 2: Computation of the energy in the half-annulus $B_{2\epsilon}^{+}\setminus B_{\epsilon}^{+}$}.}

\noindent At first, we note that on the half-annulus,
\begin{equation}\label{formula300}|\nabla V_{\mu}|_{\bar{g}}^{2}\le c|\nabla U_{\mu}|^2+\frac{c}{\epsilon^2}(U_{\mu})^2.\end{equation}
But
\begin{equation*}\begin{split}
\int_{B_{2\epsilon}^{+}\setminus B_{\epsilon}^{+}}y^a(U_{\mu})^2dxdy
&\le {\mu^2}\int_{B_{2\epsilon/\mu}^{+}\setminus B_{\epsilon/\mu}^{+}}y^a(U_{1})^2dxdy\\
&\le {\mu^2}\left(\frac{\epsilon}{\mu}\right)^{-3}\int_{B_{2\epsilon/\mu}^{+}\setminus B_{\epsilon/\mu}^{+}}y^a|(x,y)|^3U_{1}^2dxdy\\
&\le\mu^5\epsilon^{-3}o(1),
\end{split}\end{equation*}
and
\begin{equation*}\begin{split}
\int_{B_{2\epsilon}^{+}\setminus B_{\epsilon}^{+}}y^a|\nabla U_{\mu}|^2dxdy
&=\int_{B_{2\epsilon/\mu}^{+}\setminus B_{\epsilon/\mu}^{+}}y^a|\nabla U_1|^2dxdy\\
&\le\mu^5\epsilon^{-5}o(1).
\end{split}\end{equation*}
Thus from formula \eqref{formula300} we may estimate
\begin{equation}\label{3}
\int_{B_{2\epsilon}^{+}\setminus B_{\epsilon}^{+}}y^a|\nabla V_{\mu}|_{\bar{g}}^2\,dv_{\bar{g}}\le c\mu^5\epsilon^{-5}o(1).
\end{equation}
And similarly,
\begin{equation}\label{4}
\int_{B_{2\epsilon}^{+}\setminus B_{\epsilon}^{+}}E(y)V_{\mu}^2\,dv_{\bar{g}}\le c\mu^5\epsilon^{-3}o(1).
\end{equation}

\noindent{\bf{Step 3: Conclusion.}}

\noindent Next, \eqref{1}, \eqref{2}, \eqref{3} and \eqref{4} show that
\begin{equation*}\begin{split}
\int_{X}y^a&|\nabla V_{\mu}|_{\bar{g}}^2+E(y)V_{\mu}^{2}(x,y)dv_{\bar{g}}\\
&\le \frac{\Lambda_\gamma(\mathbb S^n,[g_c])}{d_{\gamma}^{*}}\left(\int_{\Gamma_{\epsilon}^{0}}w_{\mu}^{2^*}dx\right)^{\frac{n-2\gamma}{2}}
+\frac{n-3}{4}\,\overline{Ric}_{yy,y}\mu^3\int_{B_{\epsilon/\mu}^{+}}y^{a+1}U_{1}^2dxdy\\
&\quad +I_3\left(-\frac{n+2}{4}\overline{Ric}_{yy,kk}+\frac{1}{2}\overline{Ric}_{yy,kk}+\bar{R}_{yiyj,ij}\right)\mu^4\\
&\quad +I_4\left(-\frac{1}{24}\overline{Ric}_{yy,yy}+\frac{\overline{Ric}_{yy,yy}}{12n}\right)\mu^4
+I_5\left(-\frac{1}{4}\overline{Ric}_{yy,kk}\right)\mu^4\\
&\quad +I_6\left(-\frac{1}{24}\overline{Ric}_{yy,yy}\right)\mu^4 +I_7 \left(\frac{n-1+a}{4}\overline{Ric}_{yy,kk}\right)\mu^4\\
&\quad +\frac{n-1+a}{12}(\overline{Ric}_{yy,yy}+2(\bar{R}_{yiyj})^2)\mu^4\int_{\R^{n+1}_{+}}y^{a+2}U^2_{1}dxdy +o(\mu^4)
\end{split}\end{equation*}
Using \eqref{2.16}, \eqref{2.17}, \eqref{2.18}  and \eqref{2.19} to simplify the coefficients, it follows that
\begin{equation}\label{formula60}\begin{split}
\int_{X}y^a&|\nabla V_{\mu}|_{\bar{g}}^2+E(y)V_{\mu}^{2}(x,y)dv_{\bar{g}}\\
\leq&\frac{\Lambda_\gamma(\mathbb S^n,[g_c])}{d_{\gamma}^{*}}
\left(\int_{\Gamma_{\epsilon}^{0}}w_{\mu}^{2^*}dx\right)^{\frac{n-2\gamma}{2}}
+\frac{n-3}{4}\,\overline{Ric}_{yy,y}\mu^3\int_{B_{\epsilon/\mu}^{+}}y^{a+1}U_{1}^2 dxdy\\
&\quad+\mu^4R_{,ii}\left\{\frac{4-n}{8(n-1)}I_3-\frac{1}{8(n-1)}I_5
+\frac{n-1+a}{8(n-1)}I_7\right\}\\
&\quad+o(\mu^4).
\end{split}\end{equation}
 Next,
using the formulas from Lemma \ref{technical}, a direct calculation shows that
\begin{equation*}\label{thetana}
(4-n)I_3-I_5+(n-1+a)I_7=\theta(n,a)\int_{\mathbb R^{n+1}_+} y^{a+2} U_1^2dxdy,
\end{equation*}
where we have defined
\begin{equation*}\begin{split}\theta(a,n):=&\frac{1}{{10n(n+2)(n-3)(3-a)(a+1)}}\cdot \left[15n^5-90n^4+(-10a^2+20a+90)n^3\right.\\
&+(20a^2-40a+300)n^2+(3a^4-12a^3+38a^2-52a-585)n\\
&+\left.(a+1)(6a^3-30a^2-114a+270)\right].
\end{split}\end{equation*}
Thanks to \eqref{2.15}, \eqref{2.17}, \eqref{2.18} and \eqref{2.19} many curvature terms vanish and
the energy \eqref{formula60} just reduces to
\begin{equation*}\begin{split}
\int_{X}y^a&|\nabla V_{\mu}|_{\bar{g}}^2+E(y)V_{\mu}^{2}(x,y)\,dv_{\bar{g}}\leq\\
&=\frac{\Lambda_\gamma(\mathbb S^n,[g_c])}{d_{\gamma}^{*}}
\left(\int_{M}w_{\mu}^{2^*}\,dv_{\hat h}\right)^{\frac{n-2\gamma}{2}}\\
&-\mu^4\frac{1}{48(n-1)}\,\theta(n,a)|W|^2(p)\int_{\R_{+}^{n+1}}y^{a+2}U_{1}^{2}dxdy+o(\mu^4).
\end{split}\end{equation*}
It is easy to show that $\theta(n,a)>0$ for $n\geq 6$. It is actually possible to show the same result for any real $n>5+2\gamma$ with the help of Matlab, but it is not relevant in our case. We may conclude that
$$\bar I_\gamma[V_\mu,\hat h]<\Lambda_\gamma(\mathbb S^n,[g_c]),$$
as desired.
Then the proof of Theorem \ref{theorem2} is completed in view of Proposition \ref{previous-work}.
\endproof

\section{Proof of Theorem \ref{theorem1}}

From the proof for Theorem 2, it is easy to see that
\begin{equation*}\begin{split}
\int_{X}y^a&|\nabla V_{\mu}|_{\bar{g}}^{2}+E(y)V_{\mu}^{2}dv_{\bar{g}}\\
&\le  \frac{\Lambda(\mathbb S^n,[g_c])}{d_{\gamma}^{*}}\left(\int_{M}w_{\mu}^{2^{*}}dv_{\hat h}\right)^{\frac{n-2\gamma}{2}}
+\frac{n-3}{4}\,\overline{Ric}_{yy,y}\mu^3\int_{B_{\epsilon/\mu}^{+}}y^{a+1}U_1^2dxdy+o(\mu^3).
\end{split}\end{equation*}
In particular, all the extra information about Einstein was used only on the terms of order $\mu^4$, so we have the same formula.

Direct calculation also shows that for $n\ge 6$,
$$\int_{R_{+}^{n+1}}y^{a+1}U_1^2(x,y)dxdy<+\infty.$$
 Indeed, we remind the reader that
$$\hat{U}=\hat{w}(|\zeta|)\phi(|\zeta|y),\quad\hat{w}(\zeta)=C_0|\zeta|^{-\gamma}K_{\gamma}(|\zeta|),\quad
\phi(s)=s^{\gamma}K_{\gamma}(s),$$
and that
$$\int_{\R^{n+1}_{+}}y^{a+1}U^2(x,y)\,dxdy
=\int_{0}^{+\infty}{s^{a+1}\phi^2(s)}\,ds\int_{\R^n}\hat{w}^2(\zeta)\frac{1}{|\zeta|^{a+2}}\,d\zeta.$$
Looking the asymptotics from Lemma \ref{bessel0}, this integral is finite when $n-4-2\gamma>-1$. Thus,
the existence of $p\in M$ such that $\overline{Ric}_{yy,y}(p)<0$ ensures the solvability of the fractional Yamabe problem, as desired. \endproof

\section{Appendix}

\begin{Lemma}
The Fourier transform of the function
$$w(x)=\lp\frac{1}{1+|x|^2}\rp^{\frac{n-2\gamma}{2}}, \quad x\in\mathbb R^n,$$
is given by
$$\hat w(\zeta)=C_0 |\zeta|^{-\gamma}K_\gamma(|\zeta|),$$
for some constant $C_0=C_0(n,\gamma)$, and $K_\gamma$ the modified Bessel function from Lemma \ref{bessel0}.
\end{Lemma}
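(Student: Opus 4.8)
The plan is to combine the subordination (Gamma-function) representation of a negative power, the Gaussian Fourier transform, and the classical integral representation of the modified Bessel function $K_\gamma$.

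Since $\gamma\in(0,1)$ and $n\ge 3$ we have $2\gamma<n$, so the exponent $\alpha:=\frac{n-2\gamma}{2}$ is positive and for every $x\in\mathbb{R}^n$,
\[
w(x)=\bigl(1+|x|^2\bigr)^{-\alpha}=\frac{1}{\Gamma(\alpha)}\int_0^{+\infty}t^{\alpha-1}e^{-t}\,e^{-t|x|^2}\,dt .
\]
The function $w$ is bounded and tends to $0$ at infinity, hence is a tempered distribution, but it is not in $L^1(\mathbb{R}^n)$, so a little care is needed when taking the Fourier transform. The cleanest way I would organize this is to first observe that $F(\zeta):=|\zeta|^{-\gamma}K_\gamma(|\zeta|)$ does lie in $L^1(\mathbb{R}^n)$: from Lemma~\ref{bessel0}, $K_\gamma$ decays exponentially at infinity and $F(\zeta)\sim c\,|\zeta|^{-2\gamma}$ as $\zeta\to 0$, which is integrable near the origin precisely because $2\gamma<n$. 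One then computes the \emph{inverse} Fourier transform of $F$ directly, which is legitimate by honest Fubini, and concludes by uniqueness of the Fourier transform on tempered distributions (both $w$ and $\check F$ being continuous); alternatively one may pair the representation above against a Schwartz function and apply Fubini on $\mathbb{R}^n\times(0,+\infty)$ there.

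In either route the core computation is identical. Using $\widehat{e^{-t|\cdot|^2}}(\zeta)=(\pi/t)^{n/2}e^{-|\zeta|^2/(4t)}$ and interchanging the $t$-integral with the $x$- (or $\zeta$-) integral, one obtains
\[
\hat w(\zeta)=\frac{\pi^{n/2}}{\Gamma(\alpha)}\int_0^{+\infty}t^{\alpha-1-\frac n2}e^{-t-\frac{|\zeta|^2}{4t}}\,dt
=\frac{\pi^{n/2}}{\Gamma\!\left(\frac{n-2\gamma}{2}\right)}\int_0^{+\infty}t^{-\gamma-1}e^{-t-\frac{|\zeta|^2}{4t}}\,dt,
\]
where the decisive bookkeeping identity is $\alpha-1-\frac n2=-\gamma-1$, which is exactly what pins the order of the resulting Bessel function to $\gamma$. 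Finally I would invoke the classical representation $K_\nu(z)=\tfrac12(z/2)^\nu\int_0^{+\infty}t^{-\nu-1}e^{-t-z^2/(4t)}\,dt$, valid for $z>0$ and all real $\nu$, with $\nu=\gamma$ and $z=|\zeta|$; this turns the last integral into $2\,(|\zeta|/2)^{-\gamma}K_\gamma(|\zeta|)$, yielding
\[
\hat w(\zeta)=\frac{2^{1+\gamma}\pi^{n/2}}{\Gamma\!\left(\frac{n-2\gamma}{2}\right)}\,|\zeta|^{-\gamma}K_\gamma(|\zeta|)=:C_0\,|\zeta|^{-\gamma}K_\gamma(|\zeta|),
\]
up to constants coming from the chosen normalization of the Fourier transform, which affect only the value of $C_0$.

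The only genuinely delicate step is the interchange of integrals, and the preliminary reduction to the $L^1$ statement for $F$ is what makes it painless; everything else is the algebra of Gamma integrals, the Gaussian transform, and the quoted formula for $K_\gamma$. Once the cancellation $\alpha-1-\frac n2=-\gamma-1$ is noticed, nothing further is required, and the precise value of $C_0$ is immaterial for the way the lemma is used in the body of the paper — only the shape $\hat w=C_0|\zeta|^{-\gamma}K_\gamma(|\zeta|)$ enters there.
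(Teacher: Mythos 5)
Your proof is correct, but it follows a genuinely different route from the one in the paper. The paper exploits radial symmetry directly: it expands the Fourier integral in spherical coordinates, recognizes the angular integral as the Bessel function $J_{\frac n2-1}$, and then quotes the tabulated Hankel transform $\int_0^\infty r^{\nu+1}J_\nu(ar)(1+r^2)^{-\mu}\,dr=c\,a^{\mu-1}K_{\nu-\mu+1}(a)$ (formula 11.4.44 of Abramowitz--Stegun) with $\mu=\frac{n-2\gamma}{2}$. You instead use the subordination identity $(1+|x|^2)^{-\alpha}=\Gamma(\alpha)^{-1}\int_0^\infty t^{\alpha-1}e^{-t}e^{-t|x|^2}\,dt$, the Gaussian Fourier transform, and the integral representation $K_\nu(z)=\frac12(z/2)^\nu\int_0^\infty t^{-\nu-1}e^{-t-z^2/(4t)}\,dt$; the exponent bookkeeping $\alpha-1-\frac n2=-\gamma-1$ is exactly right, as is the constant you obtain. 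What your approach buys is self-containedness (no tabulated Hankel transform is needed, only the elementary Gaussian integral and a standard representation of $K_\gamma$) and, more importantly, a clean treatment of the fact that $w\notin L^1(\mathbb R^n)$: your observation that $|\zeta|^{-\gamma}K_\gamma(|\zeta|)$ \emph{is} in $L^1$ (integrable at the origin because $2\gamma<n$, exponentially decaying at infinity), so that one can run an absolutely convergent Fubini argument on the inverse transform and conclude by uniqueness of tempered distributions, is a point the paper glosses over entirely — its oscillatory integral in $r$ converges only conditionally and the convention for $\mu$ is left implicit. The paper's computation is shorter once the tabulated formula is accepted; yours is longer but fully justified.
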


\begin{proof}
In the following, all the equalities will be so up to multiplicative constant that may change from line to line. Since $w$ is a radial function, its Fourier transform will be radial too, and we can choose coordinate axes such that $\zeta=|\zeta|e_1$. Then, expanding in spherical coordinates,
\begin{equation*}
\hat w(\zeta)=\int_{\mathbb R^n} e^{-i x\cdot \zeta}w(x)\,dx
=\int_0^\infty \int_0^\pi e^{-i|\zeta|\cos\theta_1}(1+r^2)^{-\mu}r^{n-1}\sin^{n-2}\theta_1\, d\theta_1\,dr.
\end{equation*}
It is well known (\cite{Watson}, page 48) that
$$J_{\frac{n}{2}-1}(ar)=(ar)^{\frac{n}{2}-1}
\int_0^\pi e^{ia \cos\theta}\sin^{n-2}\theta \,d\theta,$$
and this function is real. Thus
$$\hat w(\zeta)=|\zeta|^{-\frac{n}{2}+1}\int_0^\infty r^{\frac{n}{2}} J_{\frac{n}{2}-1}(|\zeta|r)(1+r^2)^{-\mu}dr.$$
Finally, we recall  (11.4.44 in \cite{abramowitz}) that
$$\int_0^\infty \frac{r^{\nu+1} J_\nu(ar)}{(1+r^2)^\mu}\,dr=a^{\mu-1} K_{\nu-\mu+1}(a),$$
so
$$\hat w(\zeta)=|\zeta|^{-\frac{n}{2}+\mu}K_{\frac{n}{2}-\mu}(|\zeta|),$$
as desired.
\end{proof}

\end{document}